\documentclass[a4paper,10pt,reqno]{amsart}

\usepackage[english]{babel}
\usepackage[utf8]{inputenc}
\usepackage{amsmath,amssymb,amsfonts,amsthm,amscd}
\usepackage[mathscr]{eucal}
\usepackage{color}
\usepackage{enumitem}
\setlist[itemize]{topsep=0pt, leftmargin=2em}
\setlist[enumerate]{topsep=0pt, leftmargin=2em}

\usepackage{hyperref}
\usepackage{graphicx}

\DeclareMathOperator{\arctanh}{arc\,tanh}
\DeclareMathOperator{\arcsinh}{arc\,sinh}
\DeclareMathOperator{\area}{area}

\numberwithin{equation}{section}
\setlength{\parindent}{0.7em}
\setlength{\parskip}{0.2em}
\linespread{1.15}

\newtheorem{theorem}{Theorem}
\newtheorem*{theorem*}{Theorem}

\newtheorem{lemma}{Lemma}

\theoremstyle{definition}

\theoremstyle{remark}
	\newtheorem{remark}{Remark}

\title[Genus one minimal $k$-noids and saddle towers in $\mathbb{H}^2\times\mathbb{R}$]{Genus one minimal $k$-noids\\ and saddle towers in $\mathbb{H}^2\times\mathbb{R}$}
\date{}

\author{Jesús Castro-Infantes}
\address{Departamento de Geometría y Topología\\
Universidad de Granada, SPAIN}
\email{jcastroinfantes@ugr.es}

\author{José M.\ Manzano}
\address{Departamento de Matemáticas\\
Universidad de Jaén, SPAIN}
\email{jmprego@ujaen.es}

\subjclass[2010]{Primary 53A10; Secondary 53C30}

\keywords{Minimal surfaces, finite total curvature, minimal $k$-noids, saddle towers, conjugate construction}

\selectlanguage{english}

\begin{document}

\begin{abstract}
For each $k\geq 3$, we construct a 1-parameter family of complete properly Alexandrov-embedded minimal surfaces in the Riemannian product space $\mathbb{H}^2\times\mathbb{R}$ with genus $1$ and $k$ embedded ends asymptotic to vertical planes. We also obtain complete minimal surfaces with genus $1$ and $2k$ ends in the quotient of $\mathbb{H}^2\times\mathbb{R}$ by an arbitrary vertical translation. They all have dihedral symmetry with respect to $k$ vertical planes, as well as finite total curvature $-4k\pi$. Finally, we also provide examples of complete properly Alexandrov-embedded minimal surfaces with finite total curvature with genus $1$ in quotients of $\mathbb{H}^2\times\mathbb{R}$ by the action of a hyperbolic or parabolic translation.
\end{abstract}

\maketitle

\section{Introduction}

The theory of complete minimal surfaces in $\mathbb{H}^2\times\mathbb{R}$ with finite total curvature, i.e., those whose Gauss curvature is integrable, has received considerable attention during the last decade, mainly triggered by Collin and Rosenberg~\cite{CR}. The combined work of Hauswirth, Nelli, Sa Earp and Toubiana~\cite{HNST}, and Hauswirth, Menezes and Rodríguez~\cite{HMR} shows that a complete minimal surface immersed in $\mathbb{H}^2\times\mathbb{R}$ has finite total curvature if and only if it is proper, has finite topology and each of its ends is asymptotic to an admissible polygon, i.e., a curve homeomorphic to $\mathbb{S}^1$ consisting of finitely-many alternating complete vertical and horizontal ideal geodesics, see~\cite{HMR}. Here the product compactification of $\mathbb{H}^2\times\mathbb{R}$ is considered, in which the horizontal (resp.\ vertical) ideal boundary consists of two disks $\mathbb H^2\times\{\pm\infty\}$ (resp.\ the cylinder $\partial_{\infty}\mathbb H^2\times\mathbb{R}$). Ideal horizontal geodesics are those of the form $\Gamma\times\{+\infty\}$ or $\Gamma\times\{-\infty\}$, being $\Gamma$ a geodesic of $\mathbb H^2$, whereas ideal vertical geodesics are those of the form $\{p_\infty\}\times\mathbb{R}$, where $p_\infty\in\partial_\infty\mathbb H^2$ is an ideal point.

Combining the above classification with the previous work of Hauswirth and Rosenberg~\cite{HR}, the following Gauss--Bonnet-type formula for a complete minimal surface $\Sigma$ immersed in $\mathbb{H}^2\times\mathbb{R}$ with finite total curvature holds true:
\begin{equation}\label{eqn:generalized-GB}
\int_\Sigma K=2\pi\chi(\Sigma)-2\pi m=2\pi(2-2g-k-m),
\end{equation}
where $g$ and $k$ are the genus and the number of ends of $\Sigma$, respectively, $\chi(\Sigma)=2-2g-k$ its Euler characteristic, $K$ its Gauss curvature, and $m$ is the total number of horizontal ideal geodesics in $\mathbb{H}^2\times\{+\infty\}$, among all polygonal components associated with the ends of $\Sigma$. Observe that the union of all these components consists of $m$ ideal horizontal geodesics in $\mathbb{H}^2\times\{+\infty\}$, $m$ ideal horizontal geodesics in $\mathbb{H}^2\times\{-\infty\}$, and $2m$ ideal vertical geodesics, so the term $2\pi m$ in~\eqref{eqn:generalized-GB} can be understood as the sum of exterior angles of the asymptotic boundary of $\Sigma$. We also remark that Formula~\eqref{eqn:generalized-GB} has been extended to some quotients of $\mathbb{H}^2\times\mathbb{R}$ by Hauswirth and Menezes~\cite{HM}.

Although this characterization is very satisfactory from a theoretical point of view, it seems tough in general to determine whether or not a given family of admissible polygons actually bounds a minimal surface, or if a given topological type can be realized by such a surface. In fact, there are not many examples of surfaces with finite total curvature in the literature. Let us highlight some of them in terms of the three parameters $(g,k,m)$ appearing in~\eqref{eqn:generalized-GB}:
\begin{itemize}
\item  The simplest case is that of flat minimal surfaces, which must be vertical planes (i.e., of the form $\Gamma\times\mathbb{R}$, being $\Gamma\subset\mathbb{H}^2$ a complete geodesic) because of Gauss equation. In particular, vertical planes are the only complete minimal surfaces with finite total curvature and  $(g,k,m)=(0,1,1)$, see also~\cite[Corollary~5]{HST}. 
\item A minimal Scherk graph in $\mathbb{H}^2\times\mathbb{R}$ is a minimal graph over a geodesic ideal polygon of $\mathbb{H}^2$ with $2a$ vertexes, $a\geq 2$, taking alternating limit values $+\infty$ and $-\infty$ on the sides of the polygon. A characterization of polygons carrying such a surface is analyzed in~\cite{MRR}, in which case they have finite total curvature and satisfy $(g,k,m)=(0,1,a)$. The case $a=2$ gives rise to the only complete minimal surfaces with total curvature $-2\pi$, as shown by Pyo and Rodríguez~\cite[Theorem~4.1]{PR}. We can find as well the \emph{twisted} Scherk minimal surfaces~\cite{PR} with $(g,k,m)=(0,1,2b+1)$, $b\geq 1$, and total curvature $-4b\pi$ that are no longer graphs or bigraphs, some of which are embedded.
\item Minimal $k$-noids constructed by Morabito and Rodríguez~\cite{MorRod} (also by Pyo~\cite{Pyo} in the symmetric case) have finite total curvature, genus $0$ and $k$ ends asymptotic to vertical planes. This gives $g=0$ and $k=m\geq 2$. 
\item Horizontal catenoids are the only complete minimal surfaces immersed in $\mathbb{H}^2\times\mathbb{R}$ with finite total curvature and $k=m=2$, see~\cite{HMR,HNST}. The family of minimal surfaces with finite total curvature and $k=m\geq 3$ is not hitherto well understood, not even in the case $g=0$. The most general construction was given by Martín, Mazzeo and Rodríguez~\cite{MMR}, who found properly embedded minimal surfaces with finite total curvature in $\mathbb{H}^2\times\mathbb{R}$ of genus $g$ and $k$ ends asymptotic to vertical planes (and hence $m=k$), for arbitrary $g\geq 0$ and $k$ arbitrarily large depending on $g$.
\end{itemize}

In this paper we provide highly symmetric examples with $g=1$ and $m=k\geq 3$, which are hence conformally equivalent to a torus with $k$ punctures. They can be thought of as the counterpart in $\mathbb{H}^2\times\mathbb{R}$ of the genus $1$ minimal $k$-noids in $\mathbb{R}^3$ obtained by Mazet~\cite{Maz}. Outside a compact subset, our surfaces look like the minimal $k$-noids in~\cite{MorRod,Pyo}, and they are not globally embedded in general. Notice that there are no such examples with $k=2$ due to the aforesaid uniqueness of horizontal catenoids in~\cite{HNST}. Our main result can be stated as follows:

\begin{theorem}~\label{thm:knoids}
For each $k\geq 3$, there exists a $1$-parameter family of properly Alexandrov-embedded minimal surfaces in $\mathbb{H}^2\times\mathbb{R}$ with genus $1$ and $k$ ends, dihedrally symmetric with respect to $k$ vertical planes and symmetric with respect to a horizontal plane. They have finite total curvature $-4k\pi$ and each of their ends is embedded and asymptotic to a vertical plane. 
\end{theorem}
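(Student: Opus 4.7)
The plan is to use the conjugate Plateau construction adapted to $\mathbb{H}^2\times\mathbb{R}$, in the spirit of Mazet's genus-$1$ $k$-noids in $\mathbb{R}^3$~\cite{Maz} and of the genus-$0$ constructions in~\cite{MorRod,Pyo}. Recall that by Daniel's sister correspondence, the conjugate of a simply connected minimal surface in $\mathbb{H}^2\times\mathbb{R}$ is again a minimal surface in $\mathbb{H}^2\times\mathbb{R}$, and that the symmetry principle translates a boundary arc of $\Sigma_0$ contained in a vertical (resp.\ horizontal) totally geodesic plane into a horizontal (resp.\ vertical) geodesic arc in the boundary of the conjugate $\widetilde\Sigma_0$. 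The expected ambient symmetry group $D_k\times\mathbb{Z}_2$, generated by reflections in the $k$ vertical planes through a common axis and reflection in one horizontal plane, acts with order $4k$, so the fundamental piece $\Sigma_0$ is topologically a disk. Because the target has genus $1$, this disk must carry one extra planar boundary arc beyond those coming from the two adjacent vertical symmetry planes, the horizontal symmetry plane, and the asymptotic vertical end: this extra arc is what encodes the handle.

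First I would set up the conjugate polygonal contour $\widetilde\Gamma\subset\mathbb{H}^2\times\mathbb{R}$ whose sides are: two horizontal geodesic arcs (lying in different horizontal slices $\mathbb{H}^2\times\{t_i\}$) coming from the two vertical symmetry planes, one vertical segment coming from the horizontal symmetry plane, a short horizontal arc playing the role of the handle, and one asymptotic ideal arc corresponding to the vertical-plane end. The lengths and relative positions of these pieces depend on a small number of real parameters. I would then solve the Plateau problem for $\widetilde\Gamma$ to obtain a minimal disk $\widetilde\Sigma_0$, using standard existence theory and a convex-hull/barrier argument (in terms of a suitable mean-convex domain of $\mathbb{H}^2\times\mathbb{R}$) to guarantee uniqueness and graphicality in the interior, hence smoothness up to the non-ideal portion of $\widetilde\Gamma$. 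Conjugating yields $\Sigma_0$ whose boundary arcs are contained in totally geodesic planes and are met orthogonally.

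The main obstacle is the \emph{period problem}. Although each Plateau solution produces a piece $\Sigma_0$ with planar boundary, there is no a priori reason why reflecting $\Sigma_0$ across its symmetry planes yields a closed surface: the two endpoints of the arc coming from the handle must lie on a common vertical plane through the axis, and the two arcs coming from the adjacent vertical symmetry planes must form the correct angle $\pi/k$. This amounts to one scalar period condition $\mathcal{P}(t)=0$ in the parameter of the family. I would attack it by continuity: analyze the degenerate limits of the contour $\widetilde\Gamma$ as $t$ approaches the boundary of its admissible range — one limit collapsing the handle and recovering the genus-$0$ $k$-noids of~\cite{MorRod}, the other pushing the handle to ideal infinity — and prove that $\mathcal{P}$ changes sign between them. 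The intermediate value theorem then yields a connected $1$-parameter family of solutions.

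Once $\Sigma$ is assembled by Schwarz reflection, the remaining properties follow: each end is asymptotic to a vertical plane by construction of the ideal arc in $\widetilde\Gamma$; proper Alexandrov-embeddedness can be established by exhibiting an immersed mean-convex solid region bounded by $\Sigma$, built from the reflected fundamental piece together with its enclosed body in the convex hull of $\widetilde\Gamma$; and the total curvature is read off from the Gauss--Bonnet formula~\eqref{eqn:generalized-GB} with $g=1$, $k$ ends, and $m=k$ (each vertical-plane end contributes exactly one horizontal ideal geodesic in $\mathbb{H}^2\times\{+\infty\}$), giving $\int_\Sigma K=2\pi(2-2-k-k)=-4k\pi$.
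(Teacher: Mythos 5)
Your overall strategy (solve a Dirichlet/Plateau problem bounded by horizontal and vertical geodesics, conjugate to get a piece bounded by mirror curves, extend by reflection, and close the periods by continuity) is the same as the paper's, but the treatment of the period problem has a genuine gap. You state two geometric closing conditions --- the handle arcs must fit into a common plane through the axis, and the two vertical mirror planes must meet at angle $\pi/k$ --- and then declare that this "amounts to one scalar period condition $\mathcal P(t)=0$" in a one-parameter family of contours. These are two independent scalar conditions: in the paper's notation, $\mathcal P_1(a,\varphi,b)=0$ (the two horizontal-plane boundary curves $\widetilde v_2,\widetilde v_3$ must lie in the \emph{same} slice) and $\mathcal P_2(a,\varphi,b)=\cos(\pi/k)$ (the angle between the vertical planes containing $\widetilde h_1,\widetilde h_3$). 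With one parameter and one condition you would at best get isolated examples, not the $1$-parameter family claimed in the theorem; the paper needs a three-parameter family of Jenkins--Serrin data $(a,\varphi,b)$, solves $\mathcal P_1=0$ first by strict monotonicity in $b$ plus comparison with a symmetric Jenkins--Serrin graph over an isosceles triangle and a blow-up to a helicoid in $\mathbb R^3$ (Lemmas~\ref{lem:p1-monotonicity}--\ref{lem:first-period}), and then runs an intermediate value argument in $a$ for $\mathcal P_2$, for each fixed $\varphi\in(\pi/k,\pi/2)$, using a delicate study of the rotation angle of $\widetilde v_2$ with respect to the horocycle foliation and Gauss--Bonnet estimates (Lemma~\ref{lem:second-period}). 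Your proposed sign change "handle collapse vs.\ handle at infinity" is not substantiated; in the actual parameter space the relevant limits are a blow-up to Euclidean objects as $a\to0$ (giving $\mathcal P_2\to\cos\varphi$) and a saddle tower/catenoid degeneration as $a\to a_{\max}(\varphi)$ (giving $\mathcal P_2\to+\infty$), and identifying these limits is where most of the work lies.

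A second, smaller gap: you "read off" the total curvature $-4k\pi$ from formula~\eqref{eqn:generalized-GB}, but that formula only applies once one knows the complete surface has finite total curvature and that each end is asymptotic to an admissible ideal polygon with the claimed combinatorics ($m=k$). The paper first proves finiteness by a Collin--Rosenberg exhaustion argument (compact graphical pieces with six right angles have total curvature $-\pi$, then monotone convergence and Fatou give a lower bound $-\pi$ per fundamental piece), and only then invokes~\cite{HMR} for the asymptotics and~\eqref{eqn:generalized-GB} to pin down $-4k\pi$; embeddedness of each end is obtained via the Krust property applied to the doubled fundamental domain. You would need to supply these steps (or equivalents) for your argument to close.
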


The construction of these genus $1$ minimal $k$-noids is based on a conjugate technique, in the sense of Daniel~\cite{Dan} and Hauswirth, Sa Earp and Toubiana~\cite{HST}. Conjugation has been a fruitful technique to obtain constant mean curvature surfaces in $\mathbb{H}^2\times\mathbb{R}$ and $\mathbb{S}^2\times\mathbb{R}$, see~\cite{ManTor,ManTor2,MPT,Maz,MRR2,MorRod,Plehnert,Plehnert2,Pyo} and the references therein. We begin by considering a solution to an improper Dirichlet problem~\cite{MRR,NR} in $\mathbb{H}^2\times\mathbb{R}$ over an unbounded geodesic triangle $\Delta\subset\mathbb{H}^2$, a so-called Jenkins--Serrin problem~\cite{MRR}. These solutions are minimal graphs over the interior of $\Delta$ with prescribed finite and infinite values when one approaches $\partial\Delta$. The conjugate surface is another minimal graph in $\mathbb{H}^2\times\mathbb{R}$ whose boundary is made of curves lying on totally geodesic surfaces, i.e., vertical and horizontal planes. Since there are isometric reflections across such planes in $\mathbb{H}^2\times\mathbb{R}$, the conjugate surface can be extended to a complete surface under suitable conditions. In order to prescribe the symmetries stated in Theorem~\ref{thm:knoids}, we will encounter two period problems that will impose further restrictions on $\Delta$ and on the boundary values of the Jenkins--Serrin problem. 

Our conjugate approach is inspired by the genus $1$ minimal $k$-noids in $\mathbb{R}^3$ given by Mazet~\cite{Maz}, and by the mean curvature $\frac{1}{2}$ surfaces in $\mathbb{H}^2\times\mathbb{R}$ given by Plehnert~\cite{Plehnert}. It is important to remark that there exist technical dissimilarities between the cases $H=0$ and $H=\frac{1}{2}$ in $\mathbb{H}^2\times\mathbb{R}$ because of the fact that the conjugate of a surface with mean curvature $\frac{1}{2}$ (resp.\ $0$) is a minimal surface in Heisenberg group $\mathrm{Nil}_3$ (resp.\ $\mathbb{H}^2\times\mathbb{R}$). Furthermore, our construction can be adapted to produce complete minimal surfaces invariant by an arbitrary vertical translation (i.e., in the direction of the factor $\mathbb{R}$), similar to the saddle towers given in~\cite{MorRod}. They have genus $1$ in the quotient and they are not embedded in general.

\begin{theorem}\label{thm:saddle-towers}
For each $k\geq 3$ and each vertical translation $T$, there is a $1$-parameter family of Alexandrov-embedded singly periodic minimal surfaces in $\mathbb{H}^2\times\mathbb{R}$ invariant by $T$ and dihedrally symmetric with respect to $k$ vertical planes and a horizontal plane. They have finite total curvature $-4k\pi$, genus $1$ and $2k$ vertical ends in the quotient of $\mathbb{H}^2\times\mathbb{R}$ by $T$.
\end{theorem}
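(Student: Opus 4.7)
The plan is to adapt the conjugate construction of Theorem~\ref{thm:knoids} to produce, instead of a surface closing up into a finite-topology minimal $k$-noid, one whose fundamental piece generates a singly-periodic surface with prescribed vertical period. The ingredients are the same as before: the Daniel~\cite{Dan} and Hauswirth--Sa~Earp--Toubiana~\cite{HST} conjugation, Jenkins--Serrin theory in $\mathbb{H}^2\times\mathbb{R}$~\cite{MRR,NR}, and the philosophy used by Morabito--Rodr\'iguez~\cite{MorRod} to build the genus~$0$ saddle towers.

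First I would select a Jenkins--Serrin domain $\Delta\subset\mathbb{H}^2$ analogous to (but with one extra degree of freedom compared to) the one used in Theorem~\ref{thm:knoids}, namely a geodesic polygon combining finite and ideal vertices, together with $\pm\infty$ data on appropriate edges. The extra parameter is needed because the saddle tower construction must satisfy two period problems rather than one. After conjugation, the boundary arcs of the fundamental piece $\widetilde{\Sigma}_0$ coming from the finite-valued edges lie on horizontal totally geodesic planes; I would arrange one such arc at height $0$ (the horizontal symmetry plane in the statement) and a second arc at a height $h>0$, so that the composition of reflections across these two horizontal planes produces the vertical translation of length $2h$ that we want to match with the translation length of~$T$. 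The edges of $\Delta$ carrying $\pm\infty$ data yield, after conjugation, the boundary arcs contained in vertical totally geodesic planes, and hence the $k$-fold dihedral symmetry by iterated reflection.

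Two period problems arise, just as in Theorem~\ref{thm:knoids}: a \emph{horizontal period} that must vanish in order to glue $k$ copies of the fundamental piece along a common vertical axis and thereby create the genus~$1$ handle, in the spirit of Mazet~\cite{Maz} and Plehnert~\cite{Plehnert}; and a \emph{vertical period}~$2h$, which must equal the translation length of~$T$. I would work with an admissible three-parameter family of Jenkins--Serrin data and argue by continuity and degeneration analysis that the simultaneous solution locus (the zero set of the horizontal period intersected with the level set of the vertical period at the prescribed value) forms a continuous $1$-parameter family for each~$T$. The main obstacle is to control the boundary behaviour of the two period functions along suitable degenerations of $\Delta$ (edges collapsing, or ideal vertices merging): one must show that, along the zero locus of the horizontal period, the vertical period sweeps out all of $(0,\infty)$ so that every prescribed $T$ is attained, which is precisely the delicate asymptotic analysis at the heart of the proof.

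Once the surface $\Sigma$ is constructed, the remaining conclusions are routine. The prescribed dihedral and horizontal symmetries are built in by reflection; the genus~$1$ and $2k$ vertical ends of $\Sigma/T$ in $(\mathbb{H}^2\times\mathbb{R})/T$ follow from the combinatorics of the fundamental piece after unfolding by the symmetry group; the total curvature $-4k\pi$ comes from the Gauss--Bonnet formula~\eqref{eqn:generalized-GB} in the quotient setting~\cite{HM}, each of the $2k$ vertical ends being asymptotic to a flat cylinder in $\partial_\infty\mathbb{H}^2\times\mathbb{R}/T$ so that the correction term $m$ disappears; and Alexandrov-embeddedness of $\Sigma/T$ is inherited from the graphical character of the original Jenkins--Serrin solution, preserved under conjugation, reflection and translation in the standard way.
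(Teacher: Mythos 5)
There is a genuine gap, and it comes in two parts. First, your conjugation dictionary is inverted: the boundary arcs of the graph lying over the \emph{finite-valued} edges of $\Delta$ are horizontal geodesics, and their conjugates lie in \emph{vertical} totally geodesic planes (these are what produce the dihedral symmetries), while the curves lying in \emph{horizontal} mirror planes are the conjugates of the vertical geodesics over the vertices of $\Delta$; the edges carrying $\pm\infty$ data do not produce mirror curves at all, but the ends (ideal vertical segments in $\partial_\infty\mathbb{H}^2\times\mathbb{R}$). Because of this, your claim that the $k$-fold dihedral symmetry comes automatically ``by iterated reflection'' is unjustified: the angle between the two vertical mirror planes containing $\widetilde h_1$ and $\widetilde h_3$ is not prescribed by the data and must be tuned to exactly $\frac{\pi}{k}$; this is precisely the paper's second period problem $\mathcal P_2(a,\varphi,f(a,\varphi))=\cos(\frac{\pi}{k})$, which your scheme omits. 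Once it is reinstated you have three scalar conditions (handle closing, angle $\frac{\pi}{k}$, vertical period equal to the length of $T$) imposed on a three-parameter family, so the advertised $1$-parameter solution locus no longer follows from your parameter count.

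Second, the step you yourself identify as ``the heart of the proof'' --- that along the zero locus of the horizontal period the vertical period sweeps out all of $(0,\infty)$ --- is not carried out, and it would be genuinely delicate: the solution set of the remaining period problem is not known to be a continuous curve (the paper cannot even prove uniqueness of the value $a_\varphi$ solving $\mathcal P_2=\cos(\frac{\pi}{k})$), so there is no obvious $1$-parameter locus along which to run your continuity argument. The paper sidesteps this entirely: since the angle function tends to $0$ along the edge $\ell_2$ carrying the $+\infty$ data, the conjugate of the ideal horizontal geodesic over $\ell_2$ is an ideal vertical segment whose length equals the length $l$ of $\ell_2$; hence $\widetilde v_1$ lies at height $-l$ and, after solving $\mathcal P_1=0$, the curves $\widetilde v_2,\widetilde v_3$ lie at height $0$, so the composition of the two horizontal reflections is \emph{automatically} the vertical translation of length $2l$. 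Prescribing $T$ therefore amounts to fixing $l$ at the outset, and only the same two period problems as in Theorem~\ref{thm:knoids} remain, solved exactly as there (Lemmas~\ref{lem:first-period} and~\ref{lem:second-period} are stated for arbitrary $l\in(0,\infty]$), with the total curvature $-4k\pi$ and the asymptotics of the $2k$ quotient ends obtained from the Collin--Rosenberg exhaustion argument together with the Hauswirth--Menezes theorem in the quotient. If you incorporate this observation, your vertical-period degeneration analysis becomes unnecessary and the construction reduces to the one already used for the $k$-noids.
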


Our analysis of the period problems will allow us to find surfaces that are not invariant by a discrete group of rotations, but by discrete groups of parabolic or hyperbolic translations, which we will call \emph{parabolic and hyperbolic $\infty$-noids}, respectively. These surfaces have infinitely many ends, and we can guarantee that many of the examples are properly embedded in the hyperbolic case. Although we will not state it explicitly, analogous surfaces can be obtained in the quotient by an arbitrary vertical translation in the spirit of Theorem~\ref{thm:saddle-towers}.

\begin{theorem}\label{thm:infty-noids}
There is a $2$-parameter (resp. $1$-parameter) family of properly embedded (resp. Alexandrov-embedded) minimal surfaces in $\mathbb{H}^2\times\mathbb{R}$ with genus $0$ and infinitely many ends, invariant by a discrete group of hyperbolic (resp. parabolic) translations. Each of their ends is embedded, asymptotic to a vertical plane, and has finite total curvature.
\end{theorem}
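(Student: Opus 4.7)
The strategy is to adapt the conjugate construction of Theorem~\ref{thm:knoids}, replacing the dihedral period condition by the requirement that two of the vertical reflection planes be either asymptotic at a common ideal point or ultraparallel. As in Theorem~\ref{thm:knoids}, I would begin with a Jenkins--Serrin minimal graph $u$ on an unbounded geodesic triangle $\Delta\subset\mathbb{H}^2$ with appropriate $\pm\infty$ and finite boundary data, whose conjugate $\widetilde{u}$ is a minimal immersion bounded by curves on two vertical totally geodesic planes $P_1,P_2$ and on a horizontal slice $\mathbb{H}^2\times\{0\}$. Schwarz reflection across the horizontal slice produces a fundamental disk $\Sigma_0$ whose boundary lies on $P_1\cup P_2$, and the composition $R_{P_2}\circ R_{P_1}$ is the isometry whose iterations will extend $\Sigma_0$ to a complete surface.

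The crucial change with respect to Theorem~\ref{thm:knoids} is the geometric configuration of $P_1$ and $P_2$. If they meet at angle $\pi/k$, one recovers the dihedral case. Here I would instead prescribe that either (i) $P_1$ and $P_2$ share a single ideal point in $\partial_\infty\mathbb{H}^2$, so that $R_{P_2}\circ R_{P_1}$ is a parabolic translation, or (ii) $P_1$ and $P_2$ are ultraparallel, in which case $R_{P_2}\circ R_{P_1}$ is a hyperbolic translation along their common perpendicular. Iterated Schwarz reflections then produce a complete minimal surface $\Sigma$ invariant under the infinite cyclic group $G\cong\mathbb{Z}$ generated by this translation. Since $\Sigma_0$ is a topological disk, the quotient $\Sigma/G$ has genus $0$ with finitely many ends, so $\Sigma$ itself has genus $0$ with infinitely many ends, each asymptotic to a vertical plane as in the $k$-noid case. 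Finite total curvature of each end follows from the asymptotic description of the ends and from~\eqref{eqn:generalized-GB} applied to $\Sigma/G$.

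The period problem reduces to a single matching condition between the horizontal displacement induced by the conjugation at the two endpoints of $\partial\Sigma_0\cap\{t=0\}$ and the relative position of $P_1,P_2$. Since ultraparallel pairs of geodesics in $\mathbb{H}^2$ form a two-parameter family, whereas pairs asymptotic at a single ideal point form only a one-parameter family, I expect this single condition to cut out the advertised $2$-parameter and $1$-parameter subfamilies, solvable by the same continuity and limit analysis used for Theorems~\ref{thm:knoids} and~\ref{thm:saddle-towers}, taking the Scherk graphs and vertical plane as boundary configurations of the parameter space. The main obstacle will be global embeddedness: in the hyperbolic case, the common perpendicular of $P_1$ and $P_2$ lies at positive distance from $\Sigma_0$, and one can hope that, for a suitable open range of parameters, the iterates $g(\Sigma_0)$ with $g\in G$ remain pairwise disjoint; verifying this requires a careful comparison between the geometry of $\Sigma_0$ and the hyperbolic translation length. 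In the parabolic case, the $G$-orbits accumulate at the shared ideal point and intersections near that point are unavoidable, so only Alexandrov-embeddedness can be achieved, which should follow by producing an equivariant immersed solid region with $\Sigma$ as its boundary.
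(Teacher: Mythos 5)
Your route is essentially the paper's: the $\infty$-noids are the very same conjugate pieces $\widetilde\Sigma(a,\varphi,f(a,\varphi))$ of Theorems~\ref{thm:knoids} and~\ref{thm:saddle-towers}, with the dihedral condition $\mathcal P_2=\cos(\tfrac{\pi}{k})$ replaced by $\mathcal P_2=1$ (the two vertical mirror planes asymptotic, parabolic case) or $\mathcal P_2>1$ (ultraparallel, hyperbolic case), and both values are indeed attained because $\mathcal P_2(a,\varphi,f(a,\varphi))$ runs from $\cos\varphi<1$ to $+\infty$ by Lemma~\ref{lem:second-period} --- this is the ``continuity and limit analysis'' you invoke. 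Two remarks on your setup: the first period problem does not disappear (you tacitly place both horizontal boundary curves $\widetilde v_2,\widetilde v_3$ in one slice, which requires $b=f(a,\varphi)$ from Lemma~\ref{lem:first-period} before any horizontal reflection makes sense), and the correct source of the parameter count is that, after substituting $b=f(a,\varphi)$, the hyperbolic condition $\mathcal P_2>1$ is open in the two-dimensional $(a,\varphi)$-domain while $\mathcal P_2=1$ is a level set met for every $\varphi$; counting pairs of ultraparallel versus asymptotic geodesics is not by itself an argument.

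The genuine gap is the \emph{proper embeddedness} of the hyperbolic family, which is part of the statement and which you leave as a hope (``one can hope that the iterates remain pairwise disjoint''), to be checked by an unspecified comparison with the translation length. The paper's mechanism is different and concrete: by \eqref{eqn:emb}, if $a\geq a_{\mathrm{emb}}(\varphi)=\arcsinh(\cot\varphi)$ then the triangle $\Delta$ doubled across $\ell_1$ is convex (angle at $p_3$ at most $\tfrac\pi2$), so the Krust-type theorem makes the conjugate piece extended across the vertical plane of $\widetilde h_1$ a graph; since $a_{\mathrm{emb}}(\varphi)<a_{\mathrm{max}}(\varphi)$ for $l=\infty$ and $\mathcal P_2\to+\infty$ as $a\to a_{\mathrm{max}}(\varphi)$, one can simultaneously impose $a\geq a_{\mathrm{emb}}(\varphi)$ and $\mathcal P_2>1$ on an open two-parameter set, and only then do the successive reflections across the (disjoint) vertical planes yield a properly embedded surface. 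Without this ingredient your embeddedness claim is unproved. Two smaller inaccuracies: in the parabolic case you assert that self-intersections ``are unavoidable,'' whereas nothing of the sort is known (embeddedness is simply not guaranteed, and the theorem only claims Alexandrov-embeddedness there); and the finite total curvature of the ends is obtained from the description of periodic finite-total-curvature surfaces in the quotient (Hauswirth--Menezes~\cite{HM}), not from~\eqref{eqn:generalized-GB}, which is stated for surfaces in $\mathbb{H}^2\times\mathbb{R}$ itself.
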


The paper is organized as follows: In Section~\ref{sec:preliminaries} we will analyze some aspects of the conjugation of surfaces in $\mathbb{H}^2\times\mathbb{R}$ that will be needed in the construction, and Section~\ref{sec:construction} will be devoted to fill the details of the proof of Theorems~\ref{thm:knoids} and~\ref{thm:saddle-towers}. We will also discuss some open questions about the uniqueness and embeddedness of the constructed surfaces, as well as natural limits of the $1$-parameter family of genus $1$ $k$-noids. In the last part of the paper we will prove Theorem~\ref{thm:infty-noids}.

\medskip
\noindent\textbf{Acknowledgment.} The authors would like to express their gratitude to Magdalena Rodríguez for her valuable comments during the preparation of this manuscript, as well as the anonymous referee for their thorough revision of the manuscript, which has greatly improved the final presentation. This research was supported by \textsc{mineco--feder} project MTM2017-89677-P. The first author is also supported by the FPU program from \textsc{micinn}. The second author is also supported by the \textsc{micinn--feder} project PID2019-111531GA-I00/AEI/10.13039/501100011033.

\section{Preliminaries}\label{sec:preliminaries}

Let $\Sigma$ be a simply connected Riemannian surface. Given an isometric minimal immersion $X:\Sigma\to\mathbb{H}^2\times\mathbb{R}$, Hauswirth, Sa Earp and Toubiana~\cite{HST} proved the existence of another isometric minimal immersion $\widetilde X:\Sigma\to\mathbb{H}^2\times\mathbb{R}$ such that:
\begin{enumerate}
	\item Both immersions induce the same angle function $\nu=\langle N,\partial_t\rangle=\langle\widetilde N,\partial_t\rangle$, where $N$ and $\widetilde N$ stand for unit normal vector fields to $X$ and $\widetilde X$, respectively, and $\partial_t$ is the unit vector field in $\mathbb{H}^2\times\mathbb{R}$ in the direction of the factor $\mathbb{R}$.
	\item The shape operators $S$ and $\widetilde S$ of $X$ and $\widetilde X$, respectively, satisfy $\widetilde S=JS$, where $J$ is the $\frac\pi2$-rotation in $T\Sigma$, chosen such that both $\{\mathrm{d}X_p(u),\mathrm{d}X_p(Ju),N_p\}$ and $\{\mathrm{d}\widetilde X_p(u),\mathrm{d}\widetilde X_p(Ju),\widetilde N_p\}$ are positively oriented bases in $\mathbb H^2\times\mathbb R$ for all non-zero tangent vectors $u\in T_p\Sigma$.
	\item The tangential components $T=\partial_t-\nu N$ and $\widetilde T=\partial_t-\nu\widetilde N$ of $\partial_t$ satisfy $\widetilde X^*\widetilde T=JX^*T$. This implies that $\langle\mathrm{d}X_p(u),\partial_t\rangle=\langle\mathrm{d}\widetilde X_p(Ju),\partial_t\rangle$ for all $u\in T_p\Sigma$.
\end{enumerate}
The immersions $X$ and $\widetilde X$ are called \emph{conjugate} and determine each other up ambient isometries preserving both the global orientation and the vector field $\partial_t$. Our initial surface $X(\Sigma)$ will be a vertical graph over a convex domain, namely a solution of a Jenkins--Serrin problem. This implies that $\widetilde X(\Sigma)$ is also a vertical graph over another (possibly non-convex) domain, due to the Krust-type theorem given by~\cite[Theorem~14]{HST}. Therefore, we can assume that both surfaces are embedded and will use the notation $\Sigma$ and $\widetilde\Sigma$ for the surfaces $X(\Sigma)$ and $\widetilde X(\Sigma)$, respectively.

Although the conjugate surface $\widetilde\Sigma$ is not explicit in general, one can obtain insightful information if the initial surface $\Sigma$ has boundary consisting of horizontal and vertical geodesics intersecting at some vertexes. A curve $\Gamma\subset\Sigma$ is a horizontal (resp.\ vertical) geodesic in $\mathbb{H}^2\times\mathbb{R}$ if and only if the conjugate curve $\widetilde\Gamma\subset\widetilde\Sigma$ lies in a vertical (resp.\ horizontal) totally geodesic surface of $\mathbb{H}^2\times\mathbb{R}$ intersecting $\widetilde\Sigma$ orthogonally along $\widetilde\Gamma$. Furthermore, axial symmetry about $\Gamma$ corresponds to mirror symmetry about $\widetilde\Gamma$, which enables analytic continuation of $\Sigma$ and $\widetilde\Sigma$ across their boundaries. If the angles at the vertexes of $\partial\Sigma$ are integer divisors of $\pi$, then no singularity appears at such vertexes after successive reflections about the boundary components, and both surfaces can be extended to complete (possibly non-embedded) minimal surfaces. We refer to~\cite{ManTor,Plehnert,MRR2} for details.

However, most difficulties concerning the depiction of $\widetilde\Sigma$, and in particular deciding whether or not it is embedded, show up when one tries to understand the behavior of the conjugate of a vertical geodesic. We will now recall some properties on this matter which will be used later in Section~\ref{sec:construction}. Let $\gamma:I\to\partial\Sigma$ be a vertical geodesic with unit speed such that $\gamma'=\partial_t$ (this orientation of vertical geodesics will be fixed throughout the text), where $I\subset\mathbb{R}$ is an interval, and denote by $\widetilde\gamma:I\to\partial\widetilde\Sigma$ the conjugate curve, which will be assumed to lie in $\mathbb{H}^2\times\{0\}$ after a vertical translation. 

Let us consider the half-space model $\mathbb{H}^2\times\mathbb{R}=\{(x,y,t)\in\mathbb{R}^3:y>0\}$, whose metric is given by $y^{-2}(\mathrm{d}x^2+\mathrm{d}y^2)+\mathrm{d}t^2$, with positively oriented orthonormal frame $\{E_1,E_2,\partial_t\}$ given by $E_1=y\partial_x$ and $E_2=y\partial_y$ (observe that $E_1$ is tangent to the foliation of $\mathbb{H}^2$ by horocycles $y=y_0$ with $y_0>0$). Since $\gamma$ is vertical and $\widetilde\gamma$ lies in a horizontal slice, there exist smooth functions $\psi,\theta\in C^\infty(I)$ such that
\begin{align}
 N_{\gamma(t)}&=\cos(\psi(t))E_1+\sin(\psi(t))E_2,\label{eqn:rotation-angle}\\
 \widetilde\gamma'(t)&=\cos(\theta(t))E_1+\sin(\theta(t))E_2,\label{eqn:foliation-angle}
\end{align}
called the angle of rotation of $N$ along $\gamma$ and the angle of rotation of $\widetilde\gamma$ with respect to the foliation by horocycles, respectively. We now collect some relations between these quantities, see also~\cite{CMR,MPT,Plehnert}.

Observe that $E_1$ and $E_2$ are parallel vector fields along $\gamma$ since they satisfy $\overline\nabla_{\partial_t}E_1=\overline\nabla_{\partial_t}E_2=0$, where $\overline\nabla$ stands for the ambient Levi--Civita connection. This is due to the fact that that $\mathbb H^2\times\mathbb{R}$ is a Riemannian product and $E_1$ and $E_2$ do not depend on the variable $t$. By taking derivatives in~\eqref{eqn:rotation-angle}, we get $\overline\nabla_{\gamma'}N=-\psi'\sin(\psi)E_1+\psi'\cos(\psi)E_2=-\psi' N\times\gamma'$, where $\times$ is the cross-product in $\mathbb H^2\times\mathbb{R}$. Using the properties of the conjugation, we deduce the identity
\begin{equation}\label{eqn:kg}
\psi'=-\langle\overline\nabla_{\gamma'}N,N\times\gamma'\rangle=\langle S\gamma',J\gamma'\rangle=-\langle J\widetilde S\widetilde\gamma',J\widetilde\gamma'\rangle=\langle\overline\nabla_{\widetilde\gamma'}\widetilde N,\widetilde\gamma'\rangle=-\kappa_g,
\end{equation}
where $\kappa_g$ is the geodesic curvature of $\widetilde\gamma$ as a curve of $\mathbb{H}^2\times\{0\}$ with respect to the conormal $\widetilde N$ (recall that $\widetilde\Sigma$ intersects $\mathbb{H}^2\times\{0\}$ orthogonally). Now we will obtain further information under the additional assumption that the surfaces are \emph{multigraphs}, i.e., their common angle function $\nu$ has a sign.

\begin{figure}[t]
\begin{center}
\includegraphics[width=0.85\textwidth]{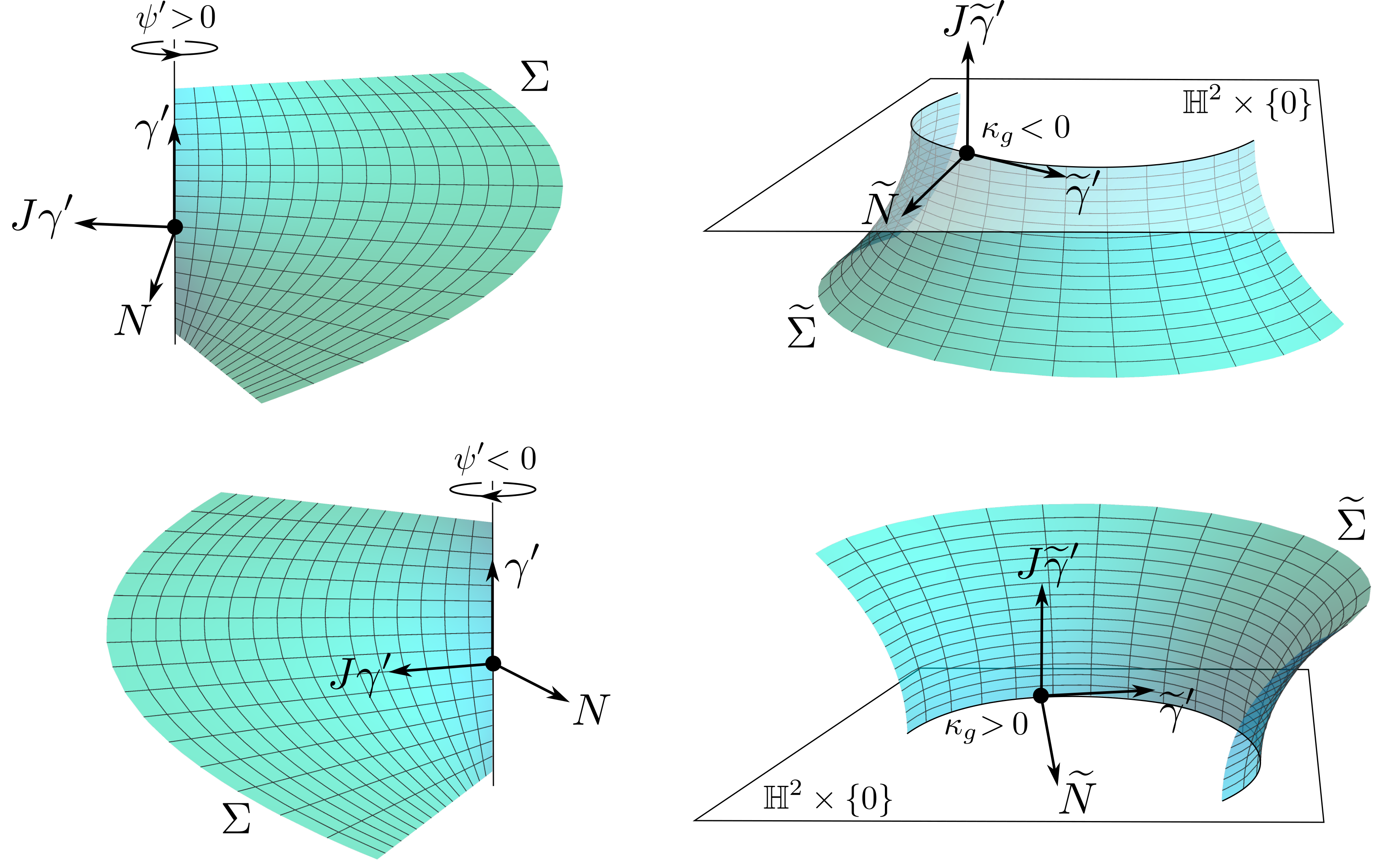}
\caption{Orientation of the conjugate surfaces $\Sigma$ and $\widetilde\Sigma$ according to the direction of rotation of $N$ along a vertical geodesic $\gamma$.}\label{fig:orientation}
\end{center}\end{figure}

\begin{lemma}\label{lem:orientation}
 Assume that the interiors of $\Sigma$ and $\widetilde\Sigma$ are multigraphs over (possibly immersed) domains $\Omega$ and $\widetilde\Omega$, respectively, with angle function $\nu>0$, and let $\gamma$ be a vertical geodesic in $\partial\Sigma$ with $\gamma'=\partial_t$. In the above notation:
\begin{enumerate}[label=\emph{(\alph*)}]
	\item If $\psi'>0$, then $J\gamma'$ (resp.\ $J\widetilde\gamma'=\partial_t$) is a unit outer conormal to $\Sigma$ (resp.\ $\widetilde\Sigma$) along $\gamma$ (resp.\ $\widetilde\gamma$), $\widetilde N$ points to the interior of $\widetilde\Omega$ along $\widetilde\gamma$, and $\widetilde\Sigma$ lies in $\mathbb{H}^2\times(-\infty,0]$ locally around $\widetilde\gamma$ (see Figure~\ref{fig:orientation}, top). 
	\item If $\psi'<0$, then $J\gamma'$ (resp.\ $J\widetilde\gamma'=\partial_t$) is a unit inner conormal to $\Sigma$ (resp.\ $\widetilde\Sigma$) along $\gamma$ (resp.\ $\widetilde\gamma$), $\widetilde N$ points to the exterior of $\widetilde\Omega$ along $\widetilde\gamma$, and $\widetilde\Sigma$ lies in $\mathbb{H}^2\times[0,+\infty)$ locally around $\widetilde\gamma$ (see Figure~\ref{fig:orientation}, bottom). 
\end{enumerate}
Either way, the identity $\theta'=\psi'-\cos(\theta)$ holds true.
\end{lemma}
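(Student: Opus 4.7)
The proof splits into two essentially independent pieces: the orientation claims (a) and (b), and the identity $\theta'=\psi'-\cos\theta$.

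I would first identify $J\gamma'$, $J\widetilde\gamma'$, and $\widetilde N$ explicitly in the frame $\{E_1,E_2,\partial_t\}$. The orientation convention that $\{\mathrm{d}X(v),\mathrm{d}X(Jv),N\}$ is positively oriented gives $J\gamma'=N\times\gamma'=\sin\psi\,E_1-\cos\psi\,E_2$. Applying the conjugation identity $\langle \mathrm{d}X(u),\partial_t\rangle=\langle \mathrm{d}\widetilde X(Ju),\partial_t\rangle$ to the vector $u$ with $\mathrm{d}X(u)=\gamma'=\partial_t$ gives $\langle J\widetilde\gamma',\partial_t\rangle=1$; since $J\widetilde\gamma'$ is a unit vector, this forces $J\widetilde\gamma'=\partial_t$ (in particular $\widetilde\Sigma$ meets $\mathbb{H}^2\times\{0\}$ orthogonally along $\widetilde\gamma$), and then $\widetilde N=\widetilde\gamma'\times\partial_t=\sin\theta\,E_1-\cos\theta\,E_2$ by the analogous positivity convention for $\widetilde X$.

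To decide which of $\pm J\gamma'$ (resp.\ $\pm\partial_t$) is the inner conormal, I would compute the derivative of the angle function at the boundary point. Since $\partial_t$ is parallel in $\mathbb{H}^2\times\mathbb{R}$ and $\partial_t|_{\gamma(t_0)}=\gamma'$, differentiating $\nu=\langle N,\partial_t\rangle$ along $+J\gamma'$ gives $-\langle S(J\gamma'),\gamma'\rangle=-\psi'$ by~\eqref{eqn:kg}; the analogous computation on $\widetilde\Sigma$, using $\widetilde S=JS$ to see that $\widetilde S(\partial_t)=\psi'\,\partial_t$, yields $-\psi'$ for the derivative of $\widetilde\nu$ along $+\partial_t$. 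Since $\nu,\widetilde\nu>0$ on the interiors and vanish on the boundary curves, the sign of $-\psi'$ determines the inner/outer side: if $\psi'>0$ both derivatives are negative, so $+J\gamma'$ and $+\partial_t$ are outer conormals and $\widetilde\Sigma$ locally lies in $t\leq 0$; if $\psi'<0$ all conclusions reverse. To pin down the side of $\widetilde\gamma$ on which $\widetilde\Omega$ lies, I would write $\widetilde\Sigma$ near $\widetilde\gamma$ as a graph $t=\widetilde u$ with $\widetilde u=0$ on $\widetilde\gamma$; the upward-normal formula shows the horizontal part of $\widetilde N$ is parallel to $-\nabla\widetilde u$, and the sign of $\widetilde u$ inside $\widetilde\Omega$ (non-positive in case (a), non-negative in case (b)) decides whether $-\nabla\widetilde u$ enters or exits $\widetilde\Omega$ along $\widetilde\gamma$, giving the claim about $\widetilde N$.

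For the final identity, I would substitute $\widetilde\gamma'=\cos\theta\,E_1+\sin\theta\,E_2$ into $\overline\nabla_{\widetilde\gamma'}\widetilde\gamma'$ and use the half-space Levi--Civita data $\overline\nabla_{E_1}E_1=E_2$, $\overline\nabla_{E_1}E_2=-E_1$, $\overline\nabla_{E_2}E_i=0$. A direct expansion gives $\overline\nabla_{\widetilde\gamma'}\widetilde\gamma'=-(\theta'+\cos\theta)\widetilde N$, so $\kappa_g=-(\theta'+\cos\theta)$; combining with $\psi'=-\kappa_g$ from~\eqref{eqn:kg} yields $\theta'=\psi'-\cos\theta$. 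The main obstacle I foresee is sign bookkeeping: several distinct orientation conventions interact (the cross product in $\mathbb{H}^2\times\mathbb{R}$, the intrinsic $J$ chosen compatibly with $N$ and $\widetilde N$, and the conormal used to define $\kappa_g$), and a single sign flip anywhere would invert the inner/outer dichotomy, so the conventions must be fixed once and for all before the calculation begins.
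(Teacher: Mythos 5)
Your proof is correct, and your derivation of $\theta'=\psi'-\cos\theta$ is literally the paper's: the same connection coefficients, the same expansion $\overline\nabla_{\widetilde\gamma'}\widetilde\gamma'=(\theta'+\cos\theta)(-\sin\theta\,E_1+\cos\theta\,E_2)$, and the same use of $\psi'=-\kappa_g$ from \eqref{eqn:kg}. Where you genuinely diverge is in how the orientation claims are justified. The paper reads off that $J\gamma'$ (hence $J\widetilde\gamma'$) points out of the surface from the positively oriented frame $\{\gamma',J\gamma',N\}$, the hypothesis $\nu>0$ and the sense of rotation encoded in Figure~\ref{fig:orientation}; it then deduces $J\widetilde\gamma'=\partial_t$ (rather than $-\partial_t$) from this outer-pointing property together with $\nu>0$, and it settles the side of $\widetilde\Omega$ to which $\widetilde N$ points by contradiction: if $\widetilde N$ pointed outward, $\kappa_g=-\psi'<0$ would force $\widetilde\Sigma$ to project into the convex side of $\widetilde\gamma$, contradicting the boundary maximum principle against the vertical plane tangent at that point. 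You instead obtain $J\widetilde\gamma'=\partial_t$ directly from property (3) of the conjugation ($\langle\mathrm{d}X(u),\partial_t\rangle=\langle\mathrm{d}\widetilde X(Ju),\partial_t\rangle$ applied with $\mathrm{d}X(u)=\gamma'=\partial_t$), decide inner versus outer conormal from the sign of the boundary derivative of the angle function (equal to $-\psi'$ both along $J\gamma'$ on $\Sigma$ and along $\partial_t$ on $\widetilde\Sigma$, via $\widetilde S=JS$), and fix the side of $\widetilde N$ from the sign of the graph function $\widetilde u$ near $\widetilde\gamma$. This is a valid, more computational alternative that avoids both the figure convention and the maximum principle. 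Two small points deserve to be made explicit: the identity $\widetilde S(\partial_t)=\psi'\,\partial_t$ uses that $\gamma$ is an ambient geodesic, so $\langle S\gamma',\gamma'\rangle=0$ (though only the diagonal entry $\langle\widetilde S\partial_t,\partial_t\rangle=\psi'$ is actually needed for the sign of the derivative), and in the last step $\nabla\widetilde u$ blows up along $\widetilde\gamma$ because the surface is vertical there, so the argument should be phrased through the continuous extension up to the boundary of the unit horizontal part $-\nabla\widetilde u/\sqrt{1+|\nabla\widetilde u|^2}$ of $\widetilde N$, whose inward or outward character is then forced by the sign of $\widetilde u$ near the arc.
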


\begin{proof}
We will only prove item (a) since item (b) is analogous, so we will suppose that $\psi'>0$. As $\{\gamma',J\gamma',N\}$ is positively oriented and $\nu>0$, it follows that $J\gamma'$ points towards the exterior of $\Sigma$ along $\gamma$ (see Figure~\ref{fig:orientation}, top left). Since the rotation $J$ is intrinsic, we deduce that $J\widetilde\gamma'$ points to the exterior of $\widetilde\Sigma$ along $\widetilde\gamma$, and $\widetilde N=\widetilde\gamma'\times J\widetilde\gamma'$ is determined by the ambient orientation.

Assume now by contradiction that $\widetilde N$ points to the exterior of $\widetilde\Omega$ at some point $p$ of $\widetilde\gamma$. Since $\kappa_g=-\psi'<0$ with respect to the conormal $\widetilde N$ and $\nu>0$, we infer that $\widetilde\Sigma$ projects locally into the convex side of $\widetilde\gamma$. This yields a contradiction with the boundary maximum principle by comparing $\widetilde\Sigma$ and a vertical plane tangent to $\widetilde\gamma$ at $p$. Note that $J\widetilde\gamma'$ cannot be equal to $-\partial_t$ (so it must be $J\widetilde\gamma'=\partial_t)$ because it points outside $\widetilde\Sigma$ along $\widetilde\gamma$ and the angle function is positive. As a consequence, a neighborhood of $\widetilde\gamma$ in $\widetilde\Sigma$ is contained in $\mathbb{H}^2\times(-\infty,0]$.

It is easy to calculate $\overline\nabla_{E_1}E_1=E_2$, $\overline\nabla_{E_1}E_2=-E_1$ and $\overline\nabla_{E_2}E_1=\overline\nabla_{E_2}E_2=0$ by using the expressions of $E_1$ and $E_2$ and Koszul formula. On the one hand, this allows us to take derivatives in~\eqref{eqn:foliation-angle} to obtain $\overline\nabla_{\widetilde\gamma'}\widetilde\gamma'=(\theta'+\cos(\theta))(-\sin(\theta)E_1+\cos(\theta)E_2)$. On the other hand, the above discussion shows that $\widetilde N=\widetilde\gamma'\times J\widetilde\gamma'=\widetilde\gamma'\times \partial_t=\sin(\theta)E_1-\cos(\theta)E_2$, so the last identity in the statement follows from plugging these computations in the expression $-\psi'=\kappa_g=\langle\overline\nabla_{\widetilde\gamma'}\widetilde\gamma',\widetilde N\rangle$.
\end{proof}

\section{Construction of genus $1$ saddle towers and $k$-noids}\label{sec:construction}

The first part of this section is devoted to prove Theorems~\ref{thm:knoids} and~\ref{thm:saddle-towers}. The arguments leading to these results are based on a conjugate construction that depends on a parameter $0<l\leq\infty$ that will be fixed henceforth. The case $0<l<\infty$ gives the saddle towers whose fundamental pieces lie in a slab of height $l$, whereas the case $l=\infty$ gives rise to the $k$-noids. Although a limit argument for $l\to\infty$ would imply the latter (as in~\cite{MorRod}) we will discuss both cases together.

\subsection{The conjugate construction}\label{subsec:periods} 

Let $\Delta$ a geodesic triangle with sides $\ell_1$, $\ell_2$ and $\ell_3$, and opposite vertexes $p_1$, $p_2$ and $p_3$. Assume that $\Delta$ is acute and the length of $\ell_2$ is given by the aforesaid parameter $l\in(0,\infty]$, so that $p_1$ is ideal if $l=\infty$. Therefore $\Delta$ is determined by the length $a$ of $\ell_1$ and by the angle $\varphi$ at $p_2$. Given $b\in\mathbb{R}$, consider the unique solution $\Sigma(a,\varphi,b)$ to the Jenkins--Serrin problem over $\Delta$ with boundary values $b$ along $\ell_1$, $+\infty$ along $\ell_2$, and $0$ along $\ell_3$. Existence and uniqueness of such a solution is guaranteed under these boundary conditions, see~\cite{MRR} and the references therein. In particular, the interior of $\Sigma(a,\varphi,b)$ is a minimal graph over $\Delta$ whose boundary consists of two horizontal geodesics $h_1$ and $h_3$ lying in $\mathbb{H}^2\times\{b\}$ and $\mathbb{H}^2\times\{0\}$, respectively, and three vertical geodesics $v_1$, $v_2$ and $v_3$ projecting onto $p_1$, $p_2$ and $p_3$, respectively. Note that $v_1$ is an ideal vertical geodesic provided that $l=\infty$. The boundary of $\Sigma(a,\varphi,b)$ also contains an horizontal ideal geodesic $h_2\subset\mathbb{H}^2\times\{+\infty\}$ projecting onto $\ell_2$. 

Since $\Delta\subset\mathbb{H}^2$ is convex, the conjugate minimal surface $\widetilde\Sigma(a,\varphi,b)\subset\mathbb{H}^2\times\mathbb{R}$ is a graph over some domain $\widetilde\Delta\subset\mathbb{H}^2$ due to the Krust-type theorem in~\cite{HST}. The normal $N$ along $v_2$ or $v_3$ rotates counterclockwise, so theoretical conjugate curves $\widetilde{v}_2$ and $\widetilde{v}_3$ lie in horizontal planes and are convex towards the exterior of $\widetilde\Delta$ by Lemma~\ref{lem:orientation}. This lemma also implies that $\widetilde\Sigma(a,\varphi,b)$ lies locally below the horizontal slices containing $\widetilde{v}_2$ and $\widetilde{v}_3$. We will assume that $\widetilde v_3\subset\mathbb H^2\times\{0\}$ in the sequel. If we decompose $\widetilde h_i=(\beta_i,z_i)\in\mathbb H^2\times\mathbb{R}$, $i\in\{1,2,3\}$, where $\widetilde h_i$ is the conjugate of $h_i$, then it is easy to check that $\|\beta_i'\|=|\nu|$ and $|z_i'|=\sqrt{1-\nu^2}$, so points at which $\nu$ takes the values $0$ or $\pm 1$ will be the key to understand the behavior of $\widetilde h_i$.

\begin{lemma}\label{lem:angle01}
The angle function $\nu$ of $\Sigma(a,\varphi,b)$ is zero precisely at $v_2\cup v_3$ if $l=\infty$ or $v_1\cup v_2\cup v_3$ if $l<\infty$. Furthermore, there is exactly one point of $\Sigma(a,\varphi,b)$ with $\nu=1$ and it belongs to $h_1$.
\end{lemma}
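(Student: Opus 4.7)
The plan is to analyze the sets $\{\nu=0\}$ and $\{\nu=1\}$ separately.

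For the first claim, in the interior of $\Sigma(a,\varphi,b)$ the surface is a vertical graph, hence $\nu>0$ with the standard orientation. Along each interior vertical boundary geodesic $v_i$ (namely $v_2$, $v_3$ always, and $v_1$ when $l<\infty$), the tangent plane contains $\partial_t$, so $\nu$ vanishes identically. On the horizontal edges $h_1$ and $h_3$, the boundary values $b$ and $0$ are finite and standard Jenkins--Serrin regularity (\cite{MRR,NR}) keeps the gradient $|\nabla u|$ bounded in a neighborhood, so $\nu>0$. When $l=\infty$, the ideal geodesic $v_1$ is not part of $\Sigma(a,\varphi,b)$; the ideal geodesic $h_2$ is never part of it either. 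Both are thus excluded from the zero set, which gives the first assertion.

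For the second claim I would first establish the existence of a $\nu=1$ point on $h_1$. Take $b>0$ (the sign of $b$ only affects on which of $h_1,h_3$ the critical point lands, so this is the relevant case for the paper's construction). Since $u\equiv b$ along $\ell_1$, the tangential derivative vanishes, and $\nu=1$ at a point of $h_1$ is equivalent to the vanishing of the inward normal derivative $\partial_\eta u|_{\ell_1}$. Near $p_2=\ell_1\cap\ell_3$, the value $0<b$ on $\ell_3$ together with the comparison principle forces $u<b$ in a neighborhood of $p_2$ inside $\Delta$, so $\partial_\eta u<0$ there; near $p_3=\ell_1\cap\ell_2$, the value $+\infty$ on $\ell_2$ forces $u>b$ locally, so $\partial_\eta u>0$. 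An intermediate value argument along $\ell_1$ then produces at least one zero of $\partial_\eta u$, i.e.\ a point of $h_1$ with $\nu=1$.

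The main obstacle is to rule out any other point of $\overline{\Sigma(a,\varphi,b)}$ with $\nu=1$. On $h_3$, $u\equiv 0$ together with $u>0$ in the interior and the Hopf boundary lemma along $\ell_3$ yields $|\nabla u|>0$, hence $\nu<1$ there. No interior critical point of $u$ can exist either: at such a point $p$ the graph would be tangent to the totally geodesic slice $\mathbb H^2\times\{u(p)\}$, and strong unique continuation for the minimal surface equation (comparing $u$ with the constant solution $u(p)$) would force $u\equiv u(p)$ in a neighborhood, contradicting $u\to +\infty$ on $\ell_2$. The delicate part is the uniqueness on $h_1$; I would address it via the conjugate picture, observing that $|z_1'|=\sqrt{1-\nu^2}$ along the conjugate curve $\widetilde h_1$, which lies in a vertical totally geodesic plane, so that $\nu=1$ points on $h_1$ correspond exactly to interior critical points of the height function $z_1$. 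Combining the Krust embeddedness of $\widetilde\Sigma(a,\varphi,b)$ from~\cite[Theorem~14]{HST} with the convexity and orientation information on $\widetilde v_2$ and $\widetilde v_3$ supplied by Lemma~\ref{lem:orientation}, I expect $z_1$ to possess a single interior extremum along $\widetilde h_1$, which gives the required uniqueness.
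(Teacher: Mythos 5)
Your treatment of the zero set of $\nu$ and of the \emph{existence} of a $\nu=1$ point on $h_1$ is broadly in line with the paper (the paper excludes zeros on the open edges $h_1,h_3$ by the boundary maximum principle against the vertical plane $\ell_i\times\mathbb{R}$, which is cleaner than invoking boundary gradient estimates, and it gets existence from the rotation of $N$ by an angle $\pi$ along $h_1$, which is the geometric counterpart of your sign change of $\partial_\eta u$). The first genuine problem is your exclusion of \emph{interior} points with $\nu=1$. Unique continuation requires contact of infinite order; a critical point of $u$, i.e.\ a first-order tangency of the graph with the slice $\mathbb{H}^2\times\{u(p)\}$, does not force $u\equiv u(p)$ near $p$ (the Euclidean Scherk graph $\log(\cos x/\cos y)$, or any minimal graph with a saddle point, has interior critical points without being a constant). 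So this step fails as stated. The correct argument is necessarily global: one analyzes the nodal set of $u-u(p)$, which near $p$ is an equiangular family of at least four curves, follows these level curves to $\partial\Delta$ using the specific boundary data $b$, $0$, $+\infty$, and applies the maximum principle to a region of the surface bounded by horizontal curves; when $l=\infty$ and the curves escape towards $v_1$, one further needs the general maximum principle of \cite[Theorem~4.16]{MMR} for ends of type 1. None of this global input is present in your proposal.

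The second problem is the uniqueness of the $\nu=1$ point on $h_1$, which you do not prove: you only say you ``expect'' the height $z_1$ to have a single interior extremum along $\widetilde h_1$. Since critical points of $z_1$ along $\widetilde h_1$ are exactly the points where $\nu=1$, this expectation is a restatement of the claim, not an argument; moreover, the Krust theorem only gives that $\widetilde\Sigma(a,\varphi,b)$ is a vertical graph, which makes the projection of $\widetilde h_1$ injective but puts no bound whatsoever on the number of local extrema of the height along that boundary curve. In the paper the fact that $z_1$ has a unique interior minimum is deduced \emph{from} this lemma, so your route is essentially circular. The paper's actual uniqueness proof is again of nodal type: if there were two points of $h_1$ with $\nu=1$, intersecting $\Sigma(a,\varphi,b)$ with the slice $\mathbb{H}^2\times\{b\}$ containing $h_1$ produces extra level arcs emanating from each of them, and these arcs, together with pieces of $h_1$ (and, if needed, the single point of $v_1$ at height $b$), enclose a region of the surface bounded by horizontal curves, contradicting the maximum principle as before. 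An argument of this kind is needed; as written, the second assertion of the lemma is not established.
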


\begin{proof}
The interior of $\Sigma(a,\varphi,b)$ is a graph so there are no interior zeros of $\nu$. Besides, zeros in the interior of $h_i$, $i\in\{1,2,3\}$, would contradict the boundary maximum principle by comparing $\Sigma(a,\varphi,b)$ and the vertical plane $\ell_i\times\mathbb{R}$.

If $\nu(p)=1$ at some interior point $p$, then the intersection of $\Sigma(a,\varphi,b)$ and the horizontal slice containing $p$ is an equiangular set of curves, and two of these curves starting at $p$ must reach some $h_i$ or have a common endpoint at some $v_i$. Either way, we can find a region of $\Sigma(a,\varphi,b)$ bounded by horizontal curves, and a standard application of the maximum principle with respect to horizontal slices gives a contradiction. If $l=\infty$ and the endpoints are in $v_1$, then the region enclosed by the two curves has an end of type 1 in the sense of~\cite[Definition~4.14]{MMR}, and the general maximum principle~\cite[Theorem~4.16]{MMR} applies in this case. By the boundary maximum principle there are no points with $\nu=1$ at $h_2\cup h_3$, but the normal $N$ rotates an angle of $\pi$ along $h_1$ (see Figure~\ref{fig:conjugate-surfaces}), which gives rise to at least one point in $h_1$ with $\nu=1$. If there were more than one, then by intersecting the surface and the horizontal slice containing $h_1$, we would get another region bounded by horizontal curves, which gives a similar contradiction as in the above argument.
\end{proof}

We deduce that $\widetilde h_3$ projects one-to-one into the factors $\mathbb{H}^2$ and $\mathbb{R}$, as well as $\widetilde h_1$ into the factor $\mathbb H^2$. However, the component of $\widetilde h_1$ in the factor $\mathbb R$ has a minimum at the unique point where $\nu=1$. On the other hand, the conjugate curve $\widetilde{h}_2$ can be thought of as an ideal vertical geodesic of length $l$ in $\partial_\infty\mathbb{H}^2\times\mathbb{R}$. Since $\Sigma(a,\varphi,b)$ becomes vertical when one approaches the side $\ell_2$, the length of the ideal vertical geodesic $\widetilde h_2$ is equal to $l$, in particular $\widetilde v_1\subset\mathbb H^2\times\{-l\}$. If $l<\infty$, then $N$ rotates clockwise along $v_1$, so the conjugate curve $\widetilde v_1$ lies in a horizontal slice (which $\Sigma(a,\varphi,b)$ approaches from above) and is convex towards the exterior of $\widetilde\Delta$ by Lemma~\ref{lem:orientation}. If $l=\infty$, then $\widetilde{v}_1$ is an ideal horizontal geodesic in $\mathbb{H}^2\times\{-\infty\}$, see also~\cite[Corollary~2.4]{CMR}. A depiction of the conjugate surfaces is given in Figure~\ref{fig:conjugate-surfaces}.

\begin{figure} 
	\includegraphics[width=0.9\textwidth]{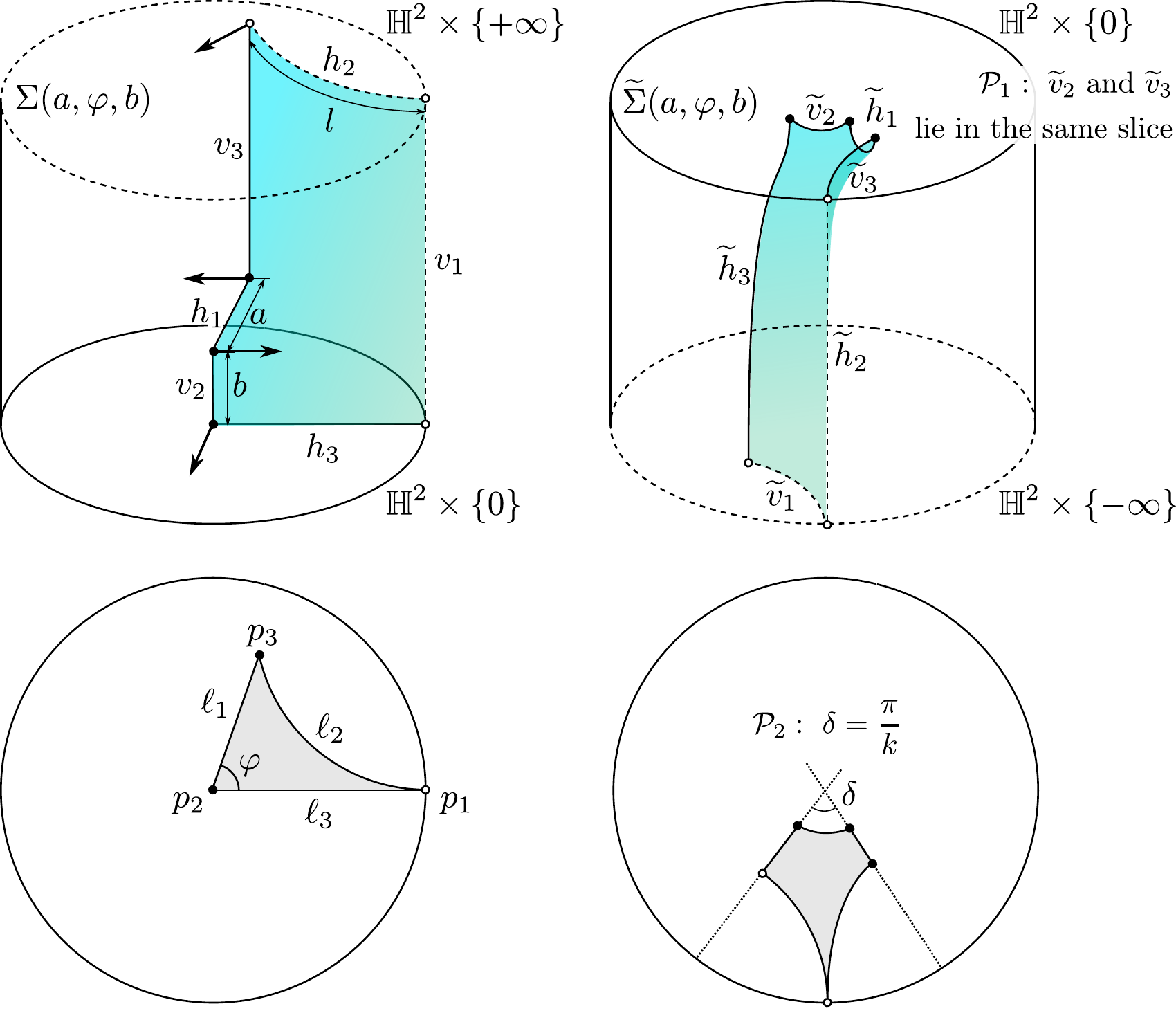}
	\caption{Conjugate surfaces $\Sigma(a,\varphi,b)$ and $\widetilde{\Sigma}(a,\varphi,b)$ and their domains $\Delta$ and $\widetilde\Delta$ in $\mathbb H^2$ in the case $l=\infty$. Dashed lines represent ideal geodesics, and white dots represent ideal vertexes. The arrows in $\Sigma(a,\varphi,b)$ represent the normal $N$ at the endpoints of $v_2$ and $v_3$, which rotates counterclockwise along both geodesics.  }\label{fig:conjugate-surfaces}
\end{figure}

We would like $\widetilde v_2$ and $\widetilde v_3$ to be contained in the same horizontal plane, as well as the complete geodesics of $\mathbb{H}^2$ containing $\widetilde h_1$ and $\widetilde h_3$ to intersect at an angle of $\frac{\pi}{k}$ (as shown in Figure~\ref{fig:conjugate-surfaces}, bottom right). Such a configuration would lead to the desired construction of saddle towers and $k$-noids, so we need to find values of $(a,\varphi,b)$ that solve the following period problems, inspired by the arguments in~\cite{Plehnert}:
\begin{enumerate}
\item \textbf{First period problem.} Let $\mathcal{P}_1(a,\varphi,b)$ be the difference of heights of the horizontal curves $\widetilde v_2$ and $\widetilde v_3$, i.e., the difference of heights of the endpoints of $\widetilde h_1$. Parameterizing $\widetilde{h}_1:[0,a]\to\mathbb{H}^2\times\mathbb{R}$ with $\widetilde h_1(0)\in\widetilde v_2$, $\widetilde h_1(a)\in\widetilde v_3$ and unit speed, by means of the properties of the conjugation, we can express
\begin{equation}\label{eqn:p1}
\mathcal{P}_1(a,\varphi,b)=\int_{\widetilde h_1}\langle\widetilde h_1',\partial_t\rangle=\int_{h_1}\langle \eta,\partial_t\rangle.
\end{equation}
where $\eta=-Jh_1'$ is the unit inward conormal vector to $\Sigma(a,\varphi,b)$ along $h_1$.

\item \textbf{Second period problem.} Let us work in the half-space model. After an ambient isometry, we can assume that $\widetilde{h}_3$ lies in the vertical plane $x=0$ and $\widetilde{v}_2:[0,b]\to\mathbb{H}^2\times\mathbb{R}$ is contained in the horizontal plane $t=0$ with endpoints $\widetilde{v}_2(0)\in\widetilde h_3$ and $\widetilde{v}_2(b)\in\widetilde h_1$. Expressing in coordinates $\widetilde v_2(t)=(x(t),y(t),0)$, we can also assume that $(x(0),y(0))=(0,1)$ and $x(t)<0$ when $t$ is close to $0$ (since $\widetilde v_2$ and $\widetilde h_3$ are orthogonal), see Figure~\ref{fig:horocycle-foliation}. We will write $(x(b),y(b))=(x_0,y_0)$ for simplicity. Recall that this parametrization comes (via conjugation) from the chosen orientation in which $v_2'=\partial_t$.

Let $\theta\in\mathcal C^\infty[0,b]$ be the angle of rotation of $\widetilde v_2$ with respect to the horocycle foliation in the sense of~\eqref{eqn:foliation-angle}, where we choose the initial angle $\theta(0)=\pi$. We will call $\theta_0=\theta(b)$ and assume in what follows that $\pi<\theta(t)<2\pi$ and $x(t)<0$ if $0<t\leq b$ (these inequalities will hold true by Lemma~\ref{lem:second-period}). The complete geodesic $\gamma\subset\mathbb{H}^2$ containing the projection of $\widetilde h_1$ can be parametrized as
\begin{equation}\label{eqn:gamma}
\gamma:(0,\pi)\to \mathbb{H}^2,\quad \gamma(t)=\left(x_0-y_0\dfrac{\cos(t)+\cos(\theta_0)}{\sin(\theta_0)}, -y_0 \dfrac{\sin(t)}{\sin(\theta_0)}\right).
\end{equation}
Note that $\gamma(\theta_0-\pi)=(x_0,y_0)$ and $\gamma'(\theta_0-\pi)=\frac{-1}{\sin(\theta_0)}(\sin(\theta_0)E_1-\cos(\theta_0)E_2)$. If $\gamma$ meets the $y$-axis at the point $\gamma(t_*)$, we define the second period as the cosine of the (non-oriented) angle $\delta$ at $\gamma(t_*)$ subtended by the arc $\widetilde v_2$ (see Figure~\ref{fig:horocycle-foliation}). From the parameterization~\eqref{eqn:gamma}, we can compute
\begin{equation}\label{eqn:p2}
\mathcal P_2(a,\varphi,b)=\cos(\delta)=\frac{\langle \gamma'(t_*),E_2\rangle}{|\gamma'(t_*)|}=\cos(t_*)=\frac{x_0\sin(\theta_0)}{y_0}-\cos(\theta_0).
\end{equation}
However, the right-hand side of~\eqref{eqn:p2} makes sense (and we will take it as the definition of $\mathcal P_2$) even though there is no such intersection point. Lemma~\ref{lem:second-period} will show that if $\mathcal P_1(a,\varphi,b)=0$ and $\mathcal P_2(a,\varphi,b)=\cos(\frac{\pi}{k})$ for some $k\geq 3$, then $\gamma$ and the positive $y$-axis do intersect with angle $\delta=\frac{\pi}{k}$. 
\end{enumerate}

\begin{figure}
	\includegraphics[height=5.8cm]{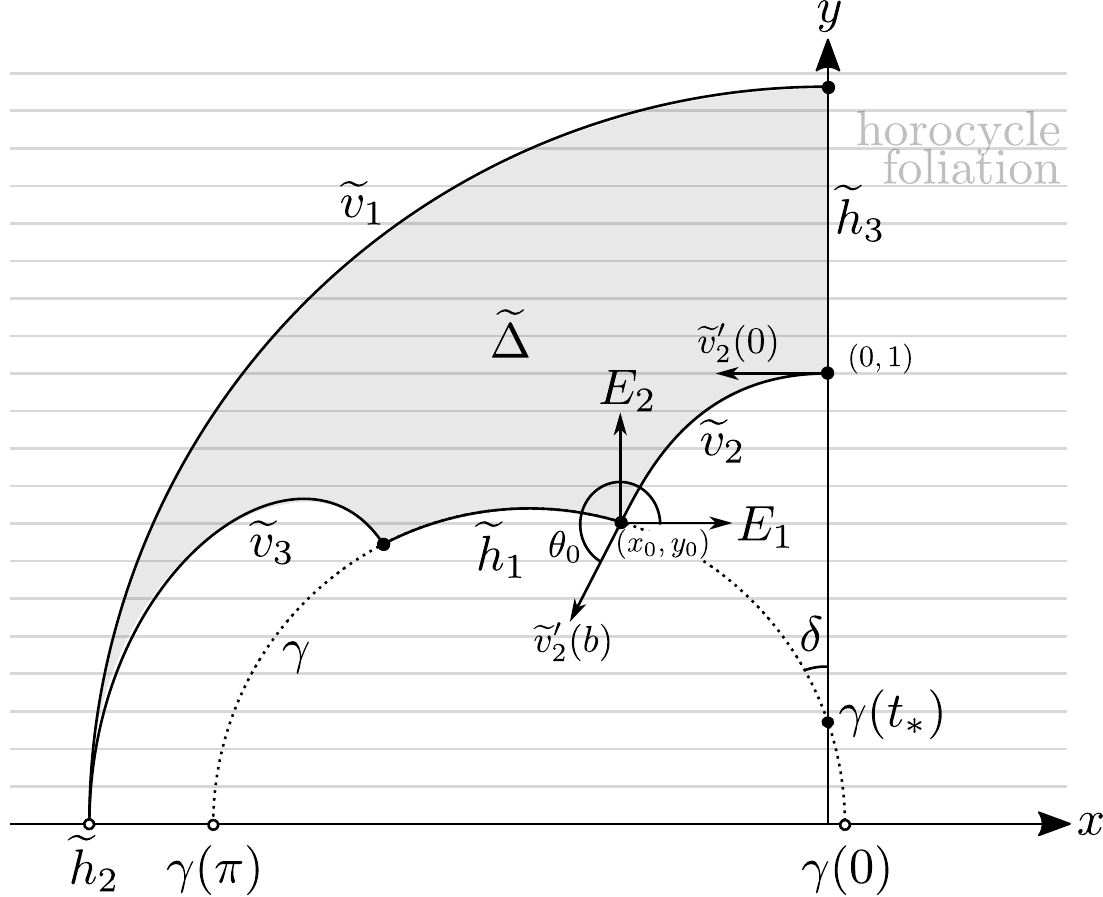}
	\caption{The angle $\theta_0$ of rotation of $\widetilde v_2$ with respect to the horocycle foliation at $\widetilde v_2(b)$, where we identify $\mathbb{H}^2\times\{0\}$ and $\mathbb H^2$. The surface $\widetilde\Sigma(a,\varphi,b)$ projects onto the shaded region $\widetilde\Delta$, with boundary the projections of the labeled curves. The complete geodesic $\gamma$ containing the projection of $\widetilde h_1$ appears in dotted line.}\label{fig:horocycle-foliation}
\end{figure}

The uniqueness of solution of the Jenkins--Serrin problem implies that $\widetilde\Sigma(a,\varphi,b)$ depends smoothly on the parameters $(a,\varphi,b)$ in the sense that, given a sequence $(a_n,\varphi_n,b_n)$ converging to some $(a,\varphi,b)$, the sequence of surfaces with boundary $\widetilde\Sigma(a_n,\varphi_n,b_n)$ converges in the $\mathcal{C}^k$-topology to $\widetilde\Sigma(a,\varphi,b)$ for all $k\geq 0$. This follows from standard convergence arguments for minimal graphs along with the continuity of the conjugation, see~\cite[Proposition~2.10]{MorRod} or~\cite[Proposition~2.3]{CMR}.

\subsection{Solving the period problems} In the sequel we will assume that $b$ is any non-negative real number and $(a,\varphi)$ lies in the domain
\[\Omega=\left\{(a,\varphi)\in\mathbb{R}^2:0<\varphi<\tfrac{\pi}{2},0<a<a_{\mathrm{max}}(\varphi)\right\}.\]
where
\begin{equation}\label{eqn:amax}
a_{\rm max}(\varphi)=2\arctanh\left(\tanh(l)\cos(\varphi)\right),
\end{equation}
and $\tanh(l)=1$ whenever $l=\infty$. The condition $0<a<a_{\rm max}(\varphi)$ means that the angle of $\Delta$ at $p_3$ is always greater than $\varphi$, and then an isosceles triangle $\Delta_{0}$ with vertexes $p_2$, $p_3$, $p_4$, and $d(p_2,p_4)=d(p_3,p_4)=l$ intersects $\Delta$ as in Figure~\ref{propl1} (the vertex $p_4$ is ideal provided that $l=\infty$). Note that $a_{\rm max}(\varphi)$ is the length of the unequal side of an isosceles triangle whose equal sides have length $l$ and whose equal angles are equal to $\varphi$, so formula~\eqref{eqn:amax} easily follows from the fact that the cosine of an angle of a hyperbolic right triangle is the quotient of the hyperbolic tangents of the adjacent side and the hypotenuse.

\begin{remark}
The restriction $(a,\varphi)\in\Omega$ is used in Lemma~\ref{lem:first-period} to compare $\Sigma(a,\varphi,b)$ with a solution of a Jenkins--Serrin problem over $\Delta_0$ and solve the first period problem. Similar arguments to those in Lemma~\ref{lem:first-period} show that if $a>a_{\mathrm{max}}(\varphi)$, then the first period problem has no solution, so the condition $(a,\varphi)\in\Omega$ is natural.
\end{remark}

\begin{figure}
		\includegraphics[height=4cm]{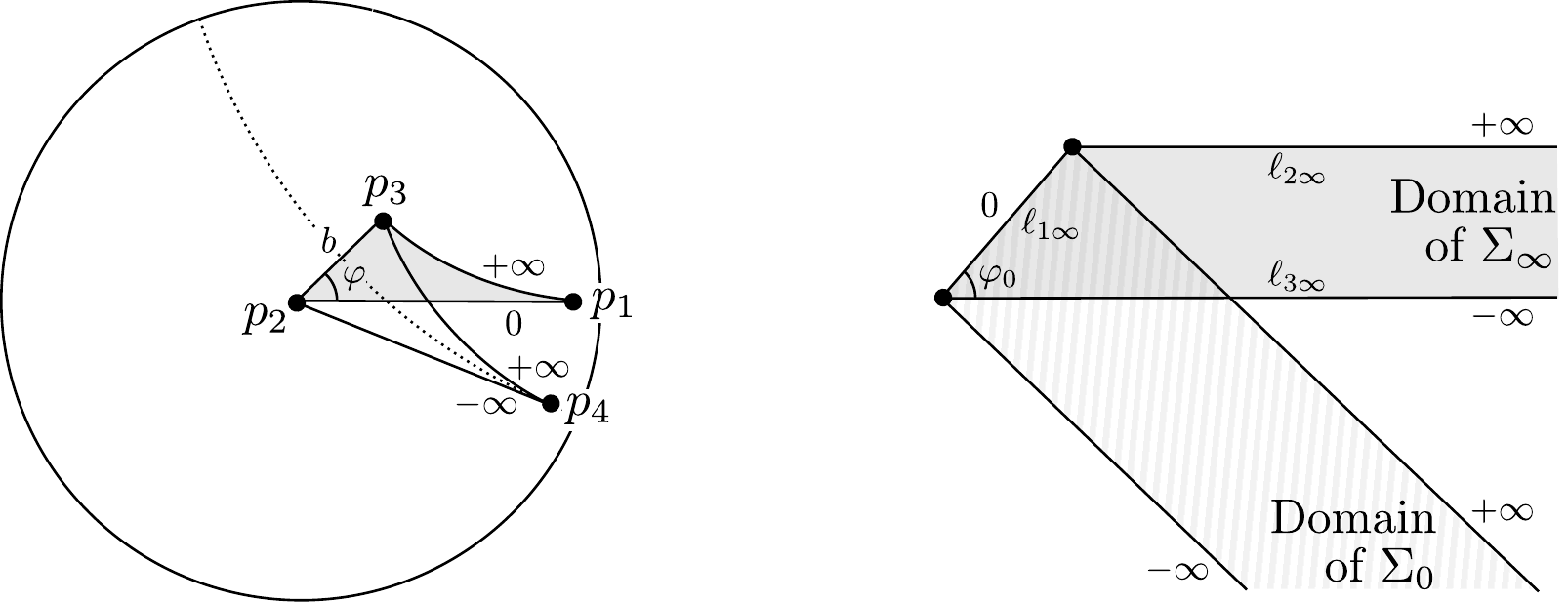}
		\caption{On the left, boundary values for Jenkins--Serrin problems in $\mathbb{H}^2$ solved by $\Sigma(a,\varphi,b)$ and $\Sigma_{0}(b)$, where the perpendicular bisector of $\ell_1$ is represented in dotted line and $l<\infty$. On the right, the limit $\Sigma_\infty\subset\mathbb{R}^3$ by rescaling (fixing the length of $\ell_1$ equal to $1$) and the helicoid $\Sigma_0\subset\mathbb{R}^3$ in the proof of Lemma~\ref{lem:first-period}.}\label{propl1}
	\end{figure}

\begin{lemma}\label{lem:p1-monotonicity}
	$\mathcal P_1:\Omega\times\mathbb{R}^+\to\mathbb{R}$ is a continuous and strictly decreasing function with respect to the third argument $b$.
\end{lemma}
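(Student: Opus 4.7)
The plan is to reduce both assertions to classical facts about the Jenkins–Serrin solution $u=u_{a,\varphi,b}$ whose graph is $\Sigma(a,\varphi,b)$. Parametrize $\ell_1$ by arc length and let $\xi$ denote the inward unit normal to $\ell_1$ in $\mathbb{H}^2$. Since $h_1=\ell_1\times\{b\}$ and $\Sigma$ is a graph over $\Delta$ with $u=b$ on $\ell_1$, the vector $\xi+(\partial_\xi u)\partial_t$ is tangent to $\Sigma$ and orthogonal to $h_1'$, projects to the inward normal to $\Delta$, and therefore coincides (after normalization) with the inward conormal $\eta$. Consequently,
\begin{equation*}
\langle\eta,\partial_t\rangle=\frac{\partial_\xi u_{a,\varphi,b}}{\sqrt{1+(\partial_\xi u_{a,\varphi,b})^2}},
\end{equation*}
and the formula for $\mathcal{P}_1$ becomes an integral along $\ell_1$ of this bounded quantity.

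For continuity, I would appeal to the fact, invoked earlier in Section~\ref{subsec:periods}, that the Jenkins–Serrin solution depends continuously on the parameters: if $(a_n,\varphi_n,b_n)\to(a,\varphi,b)$, then $u_{a_n,\varphi_n,b_n}\to u_{a,\varphi,b}$ in the $\mathcal{C}^k$ topology on compact subsets of the interior of $\ell_1$. Since $|\langle\eta,\partial_t\rangle|\leq 1$, dominated convergence (after parameterizing each $\ell_1$ on a common interval) gives continuity of $\mathcal{P}_1$. Some care is required near the corners $p_2,p_3$ of $\ell_1$, where $u$ has a jump in boundary values, but since $\langle\eta,\partial_t\rangle$ is uniformly bounded by $1$ and $\ell_1$ has finite length, the tails contribute uniformly small amounts and continuity is preserved.

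For strict monotonicity in $b$, fix $(a,\varphi)\in\Omega$ and $0\leq b_1<b_2$, and consider the shifted functions $w_i=u_{a,\varphi,b_i}-b_i$. Each $w_i$ is again a minimal graph (the minimal surface equation is invariant under vertical translations) that solves the Jenkins–Serrin problem on $\Delta$ with values $0$ on $\ell_1$, $+\infty$ on $\ell_2$, and $-b_i$ on $\ell_3$. On $\partial\Delta$ the ordering $w_1\geq w_2$ holds, strictly on $\ell_3$, so the general maximum principle for Jenkins–Serrin solutions (with the behaviour at the ideal side $\ell_2$ handled by the standard exhaustion argument) gives $w_1>w_2$ in the interior of $\Delta$. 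Since $w_1-w_2$ vanishes on $\ell_1$, is positive in $\Delta$, and satisfies the linearized minimal surface equation (a uniformly elliptic PDE on compact subsets meeting only the interior of $\ell_1$), Hopf's boundary point lemma gives $\partial_\xi(w_1-w_2)>0$ on the interior of $\ell_1$. Thus $\partial_\xi u_{a,\varphi,b_1}>\partial_\xi u_{a,\varphi,b_2}$ pointwise on $\ell_1$, and because $s\mapsto s/\sqrt{1+s^2}$ is strictly increasing, the same strict inequality passes to $\langle\eta,\partial_t\rangle$. Integrating over $h_1$ yields $\mathcal{P}_1(a,\varphi,b_1)>\mathcal{P}_1(a,\varphi,b_2)$.

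The main obstacle is justifying the comparison and Hopf arguments in the presence of the infinite boundary value on $\ell_2$; this is standard but needs to be addressed by truncating $\Delta$ near $\ell_2$ and taking a limit, or equivalently by invoking a version of the maximum principle for Jenkins–Serrin solutions already available in~\cite{MRR}. Everything else is either a direct computation or follows from the continuous dependence of minimal graphs on their data.
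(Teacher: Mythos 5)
Your proposal is correct and follows essentially the same route as the paper: normalizing both graphs to vanish on $\ell_1$, using the Jenkins--Serrin maximum principle to order them (strictly, because of the ordering on $\ell_3$), and then comparing conormals along $h_1$ via the boundary maximum principle --- which you simply unpack as Hopf's lemma for the difference of the graph functions --- before integrating. The continuity statement, which the paper's proof leaves to the continuous dependence on parameters recorded just before the lemma, is handled by you in the same way, with the harmless extra remark about dominated convergence near the corners.
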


\begin{proof}
Consider two surfaces $\Sigma_1=\Sigma(a,\varphi,b_1)$ and $\Sigma_2=\Sigma(a,\varphi,b_2)$ with $0<b_1<b_2$. Let us translate each $\Sigma_i$ vertically so that it takes the values $0$ along $\ell_1$ and $-b_i$ along $\ell_3$. The surface $\Sigma_1$ lies above $\Sigma_2$ and we can compare the vertical components of their inward-pointing conormals $\eta_1$ and $\eta_2$, which satisfy $\langle \eta_2 , \partial_t\rangle<\langle \eta_1, \partial_t\rangle$ in the interior of $h_1$ by the boundary maximum principle. In particular, 
\[\mathcal P_1(a,\varphi,b_2)=\int_{h_1}\langle \eta_2 , \partial_t\rangle<\int_{h_1}\langle \eta_1 , \partial_t\rangle=\mathcal P_1(a,\varphi,b_1).\qedhere\]
\end{proof}

In the proof of Lemma~\ref{lem:first-period} we need that a Jenkins--Serrin solution over $\Delta_0$ has radial limits as we approach a vertex of $\Delta_0$. To this end, we will state a more general result (we also point out that it extends easily to other $\mathbb{E}(\kappa,\tau)$-spaces).

\begin{lemma}\label{lem:radial-limits}
Let $\Omega\subset\mathbb{H}^2$ be an open domain with piecewise regular boundary, and assume that $\beta_1$ and $\beta_2$ are two regular components of $\partial\Omega$ that meet in a common vertex $p\in\partial\Omega$ with interior angle $0<\alpha<2\pi$. Suppose that $u\in\mathcal C^\infty(\Omega)$ is a solution to the minimal surface equation over $\Omega$ with bounded continuous limit values along $\beta_1$ and asymptotic value $+\infty$ or $-\infty$ along $\beta_2$. Then, $u$ has finite radial limits at $p$ along any geodesic segment interior to $\Omega$ and not tangent to $\beta_1$ or $\beta_2$.
\end{lemma}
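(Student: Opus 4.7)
The plan is to reduce the statement to the case of a bounded minimal graph near a boundary corner and then invoke a Nitsche-type radial limit theorem. Let $\alpha \in (0, 2\pi)$ be the interior angle of $\Omega$ at $p$ and $\theta_0 \in (0, \alpha)$ the angle between the initial direction of $\sigma$ at $p$ and the tangent direction of $\beta_1$ there. Since $\sigma$ is not tangent to $\beta_1$ or $\beta_2$, both $\theta_0$ and $\alpha - \theta_0$ are positive. Working in geodesic polar coordinates centered at $p$, I would fix $\delta > 0$ and $\epsilon > 0$ small so that the subsector
\[S_\delta = \{(r, \theta) : 0 < r < \epsilon, \; |\theta - \theta_0| < \delta\}\]
is contained in $\Omega$, contains $\sigma([0, \epsilon])$, and stays bounded away from $\beta_1 \cup \beta_2$ except at the vertex $p$.

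To bound $u$ along $\sigma$, I would construct upper and lower barriers of Jenkins--Serrin type. For the upper barrier $v^+$, take a geodesic triangle $T^+ \subset \Omega$ with vertex $p$ containing $S_\delta$ and with one side close to $\beta_2$. By the existence results of~\cite{MRR}, if $T^+$ is admissible one can solve the Jenkins--Serrin problem on $T^+$ with boundary value $+\infty$ on the side near $\beta_2$ (matching the blow-up sign of $u$) and a sufficiently large constant on the remaining sides, obtaining a solution $v^+$ with $u \le v^+$ on $T^+ \cap \Omega$ by the maximum principle. An analogous construction with $-\infty$ data gives a lower barrier $v^- \le u$. Consequently, $u$ is uniformly bounded along $\sigma$ in a neighborhood of $p$. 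Once this is established, I would invoke a Nitsche-type radial limit theorem: any bounded solution of the minimal surface equation in a geodesic sector admits finite radial limits at the vertex along every interior direction, a classical consequence of the divergence structure of the minimal graph equation and of monotonicity/gradient estimates; see the arguments in the spirit of~\cite{CR, HR}. Applied to the now-bounded $u$ on $S_\delta$, this yields the finite limit $\lim_{s \to 0^+} u(\sigma(s))$.

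The main obstacle is the barrier construction. One has to verify that the auxiliary triangles $T^\pm$ are admissible in the Jenkins--Serrin sense of~\cite{MRR}, which requires certain inequalities between their side lengths; this can be ensured by shrinking them close enough to $p$. In addition, one must arrange that the side of $T^+$ adjacent to $\beta_2$ is positioned so that the comparison with $u$ on the common boundary is genuine, which may require placing it slightly inside $\Omega$ and adjusting the constants on the other sides. A secondary technical point is the precise transcription of Nitsche's Euclidean radial limit theorem to $\mathbb{H}^2$, but this is essentially routine because the minimal graph equation in $\mathbb{H}^2$ is quasilinear elliptic of the same structural type as in $\mathbb{R}^2$.
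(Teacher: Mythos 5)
Your reduction has two genuine gaps, and the key external ingredient is not available in the form you cite. First, the barrier comparison is circular as stated. To conclude $u\le v^+$ on $T^+$ you must dominate $u$ on all of $\partial T^+$: the side through $p$ carrying the $+\infty$ datum is harmless, and the side opposite $p$ is compact in $\Omega$, but the remaining side is an interior geodesic segment ending at $p$, and no a priori bound for $u$ on such a segment is available --- boundedness of $u$ along interior segments ending at $p$ is essentially what the lemma asserts. If instead you run the comparison on a region bounded by $\beta_1$, a geodesic issuing from $p$ with $+\infty$ datum, and a far cross-cut (so that every piece of the boundary where $u$ blows up carries the infinite datum of the barrier), the same difficulty reappears for the barrier itself: $v^{+}$ is then a Scherk-type solution with a corner at $p$ where finite and infinite data meet, so knowing that $v^{+}$ (hence $u$) stays bounded along $\sigma$ near $p$ requires exactly the local-boundedness/radial-limit statement of the lemma for the model solution, which you do not supply. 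Second, the ``Nitsche-type'' step: existence of radial limits at a corner for \emph{bounded} solutions of the minimal surface equation is a theorem of Lancaster and Elcrat--Lancaster in the Euclidean setting, with a genuinely nontrivial proof (isothermal parametrization, Courant--Lebesgue lemma, structure of the Gauss map); it is not contained in \cite{CR} or \cite{HR}, and its transcription to the vertical minimal graph equation in $\mathbb{H}^2\times\mathbb{R}$, which is a different quasilinear operator, cannot be dismissed as routine without proof.

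For comparison, the paper avoids both issues by a direct geometric argument: the closure of the graph of $u$ contains the vertical geodesic segment over $p$; the surface extends across it by axial symmetry, and since it is a graph the unit normal rotates monotonically along that segment, hence so does the horizontal conormal, which at height $t$ is tangent to the level curve $\{u=t\}$ ending at $p$. The projections of these level curves therefore form an open book foliation of a neighborhood of $p$ with binding at $p$, and along any interior ray not tangent to $\beta_1$ or $\beta_2$ the function $u$ converges to the height of the unique leaf tangent to that ray. To salvage your route you would need an independent analysis of the model barrier at its vertex (or some other a priori sector bound for $u$) together with a proof of the Lancaster-type theorem in $\mathbb{H}^2\times\mathbb{R}$; the paper's argument makes both unnecessary.
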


\begin{proof}
Let $\gamma$ be the vertical geodesic segment projecting onto $p$ and lying in the boundary of $\Sigma$, the minimal surface spanned by $u$. Observe that $\Sigma$ extends analytically across $\gamma$ by axial symmetry, so the normal $N$ (with $\nu>0$ at the interior of $\Sigma$) extends smoothly to $\gamma$. Moreover, $N$ rotates monotonically along $\gamma$ because $\Sigma$ is a graph, as a consequence of the boundary maximum principle for minimal surfaces. Therefore, the conormal $J\gamma'=N\times\gamma'$ also rotates monotonically along $\gamma$ (see Figure~\ref{fig:orientation}). Since $J\gamma'$ is horizontal and tangent to the level curves of the height function of $\Sigma$, we deduce that the projections of such level curves form an open book foliation of a neighborhood of $p$ with binding at $p$. 

This implies that, when we approach $p$ along an interior geodesic $\sigma$ not tangent to $\beta_1$ or $\beta_2$, the limit of $u$ will be precisely the value of $u$ at the unique level curve (in the aforesaid foliation) tangent to $\sigma$ at $p$, so the desired limit exists and is finite.
\end{proof}

\begin{lemma}\label{lem:first-period}
There exists a unique function $f:\Omega\to\mathbb{R}_+$ such that $\mathcal P_1(a,\varphi,f(a,\varphi))=0$ for all $(a,\varphi)\in\Omega$. Furthermore,
\begin{enumerate}[label=\emph{(\alph*)}]
	\item $f$ is a continuous function;
	\item given $\varphi_0\in(0,\frac{\pi}{2})$, 
	\[\lim\limits_{a\to a_{\mathrm{max}}(\varphi_0)}f(a,\varphi_0)=+\infty,\qquad\lim\limits_{(a,\varphi)\to (0,\varphi_0)}f(a,\varphi)=0.\]
\end{enumerate}
\end{lemma}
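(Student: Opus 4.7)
My plan is to combine the strict monotonicity of $\mathcal{P}_1(a,\varphi,\cdot)$ from Lemma~\ref{lem:p1-monotonicity} with the intermediate value theorem: uniqueness of $f(a,\varphi)$ is immediate once a sign change is established, so I focus on showing $\mathcal{P}_1(a,\varphi,b)>0$ for $b$ small and $\mathcal{P}_1(a,\varphi,b)<0$ for $b$ large.

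For the small-$b$ positivity, I would extend continuously to $b=0$: the solution $\Sigma(a,\varphi,0)$, with boundary data $0$ on $\ell_1\cup\ell_3$ and $+\infty$ on $\ell_2$, is strictly positive in the interior of $\Delta$ by the maximum principle, whence the Hopf boundary point lemma forces the vertical component of the inward conormal $\eta$ along $h_1$ to be strictly positive pointwise. Formula~\eqref{eqn:p1} then gives $\mathcal{P}_1(a,\varphi,0)>0$, and continuous dependence of the Jenkins--Serrin solution on the boundary data propagates this positivity to a right-neighborhood of $b=0$.

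For the large-$b$ negativity, I plan to use the symmetric Jenkins--Serrin solution $\Sigma_0(b)$ over the isoceles triangle $\Delta_0$ with values $b$ on $\ell_1$ and $+\infty$ on both equal sides $\overline{p_2p_4}$, $\overline{p_3p_4}$. Reflection symmetry across the perpendicular bisector of $\ell_1$ places the conjugate arcs of $v_2$ and $v_3$ in the same horizontal slice, so $\Sigma_0(b)$ has vanishing period. Because $\Delta\subset\Delta_0$ when $(a,\varphi)\in\Omega$, I expect the sign change to emerge from the following rescaling: translating by $-b$ and letting $b\to\infty$, the solutions $u_b$ converge to the Jenkins--Serrin solution with data $(0,+\infty,-\infty)$ along $(\ell_1,\ell_2,\ell_3)$, whose existence is guaranteed by the triangle inequality $|\ell_2|<|\ell_1|+|\ell_3|$. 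An analysis in the conjugate picture, exploiting the symmetry of $\Sigma_0$ together with the asymmetry introduced by replacing the $+\infty$ condition on $\overline{p_2p_4}$ by a $0$ condition on $\ell_3$, should show the limiting period is strictly negative.

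With the sign change secured, existence of $f$ follows, continuity of $f$ comes from the implicit function theorem applied with $\partial_b\mathcal{P}_1<0$, and the limits in part (b) are obtained by analogous comparison/degeneration arguments: as $a\to a_{\mathrm{max}}(\varphi_0)$, the asymmetric boundary value on $\ell_3$ persists in the limit over $\Delta_0$ and prevents the vanishing of $\mathcal{P}_1$ at any finite $b$, forcing $f\to+\infty$; while as $(a,\varphi)\to(0,\varphi_0)$, the triangle collapses and the boundary-layer behavior near $\ell_3$ makes $\mathcal{P}_1$ negative at each fixed $b>0$, forcing $f\to 0$. The principal obstacle I foresee is the large-$b$ argument: a naive maximum-principle comparison of $u$ with $u_0$ fails because they satisfy opposing inequalities along $\ell_2$ (where $u=+\infty>u_0$) and along $\ell_3$ (where $u=0<u_0$), so one must extract the sign either from the conjugate surface geometry (comparing the heights of the conjugate horizontal arcs at $p_2,p_3$ directly) or from a careful use of the flux formula along $\ell_1$ applied in the Jenkins--Serrin limit.
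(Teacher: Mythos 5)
Your skeleton (strict monotonicity in $b$ from Lemma~\ref{lem:p1-monotonicity}, positivity at $b=0$ via comparison with the slice and the Hopf lemma, then the intermediate value theorem) is the paper's, and the small-$b$ step is fine. The genuine gap is the large-$b$ negativity. Your comparison surface $\Sigma_0(b)$, with value $b$ on $\ell_1$ and $+\infty$ on \emph{both} equal sides of $\Delta_0$, does \emph{not} have vanishing period: its symmetry is a mirror reflection across a vertical plane, which preserves $\partial_t$, so the integrand $\langle\eta_0,\partial_t\rangle$ in~\eqref{eqn:p1} is \emph{even} along $h_1$, not odd. In fact this solution is $\geq b$ on $\partial\Delta_0$, hence $>b$ in the interior by the maximum principle, and the Hopf boundary lemma gives $\langle\eta_0,\partial_t\rangle>0$ along $h_1$, so its flux is strictly positive. (In the conjugate picture, the mirror symmetry of $\Sigma_0$ corresponds to an axial symmetry of $\widetilde\Sigma_0$ about a horizontal geodesic, which places the conjugates of $v_2$ and $v_3$ at heights symmetric about that geodesic's height, not at equal heights.) Your comparison over $\Delta\cap\Delta_0$ is valid but only yields that $\mathcal P_1(a,\varphi,b)$ is smaller than a strictly positive number, which gives no sign. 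The paper's comparison surface instead carries the data $b$ on $\ell_1$, $+\infty$ on $\overline{p_3p_4}$ and $-\infty$ on $\overline{p_2p_4}$: it is invariant under the rotation by $\pi$ about the horizontal geodesic bisecting $h_1$ at height $b$, which reverses $\partial_t$, makes the integrand odd and the period zero; one then needs Lemma~\ref{lem:radial-limits} (finite radial limit at $p_2$ along $\ell_3$) to ensure that for $b$ large $\Sigma_0(b)\geq\Sigma(a,\varphi,b)$ on $\partial(\Delta\cap\Delta_0)$, and the boundary maximum principle along $h_1$ gives $\mathcal P_1(a,\varphi,b)<0$. Your alternative route (translate by $-b$ and pass to the limit solution with data $(0,+\infty,-\infty)$) could be made to work, but determining the sign of the limiting flux again requires exactly this mixed-data symmetric barrier, so it is not an independent fix.

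Two further points. For continuity of $f$ you invoke the implicit function theorem, which would require $\mathcal C^1$ dependence of $\mathcal P_1$ on $b$; this has not been established and is unnecessary — uniqueness of the zero plus continuity (equivalently strict monotonicity) of $\mathcal P_1$ already gives continuity of $f$, which is how the paper argues. For item (b) you only assert the limits. The paper proves the first limit by contradiction: if $f(a_n,\varphi_0)\to b_\infty<\infty$, the limit surface over $\Delta=\Delta_0$ and the mixed-data symmetric surface $\Sigma_0(b_\infty)$ are ordered graphs with the same value $b_\infty$ on $\ell_1$, so the boundary maximum principle forces strictly ordered fluxes along $h_1$, contradicting that both periods vanish. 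The second limit is proved by a blow-up: rescaling so that $a_n=1$, the surfaces converge to a minimal graph in $\mathbb{R}^3$ with data $0$, $+\infty$, $-\infty$ on three lines, and a comparison with a helicoid of axis $\ell_{1\infty}$ shows its period cannot vanish when $b_\infty>0$. Your sketch ("boundary-layer behavior near $\ell_3$ makes $\mathcal P_1$ negative at each fixed $b$") states the right strategy for the second limit but supplies no mechanism; some degeneration argument of this blow-up type is needed.
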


\begin{proof}
Fix $(a,\varphi)\in\Omega$. Let $p_4$ be the point in the perpendicular bisector of the segment $\ell_1$ with $d(p_2,p_4)=d(p_3,p_4)=l$, such that the triangle $\Delta_0$ with vertexes $p_1$, $p_2$ and $p_4$ lies on the same side of $\ell_1$ as $\Delta$, see Figure~\ref{propl1}. Let $\Sigma_0(b)$ be the unique solution to the Jenkins--Serrin problem over the triangle $\Delta_0$ with values $b$ along the segment $\overline{p_2p_3}$, $+\infty$ along $\overline{p_3p_4}$, and $-\infty$ along $\overline{p_2p_4}$. Since $(a,\varphi)\in\Omega$, then $\Delta\cap\Delta_0\subset\mathbb{H}^2$ is a bounded geodesic triangle with vertexes $p_2$, $p_3$ and $q\in\ell_3$. Lemma~\ref{lem:radial-limits} says that $\Sigma_0(b)$ has a finite radial limit at $p_2$ along $\ell_3$ as a graph over $\Delta_0$. Therefore, if $b$ is large enough, then $\Sigma_0(b)$ is above $\Sigma(a,\varphi,b)$ over the boundary of $\Delta\cap\Delta_0$. By the maximum principle, it is also above $\Sigma(a,\varphi,b)$ in the interior of $\Delta\cap\Delta_0$. In particular, we get that $\Sigma\cap\Sigma_0(b)=h_1\cup v_2\cup v_3$ when $b$ is large. 

This means that can compare the vertical components of the inward-pointing conormals $\eta$ and $\eta_0$ of $\Sigma(a,\varphi,b)$ and $\Sigma_0(b)$, respectively, along the curve $h_1$ (as in Lemma~\ref{lem:p1-monotonicity}). By the boundary maximum principle for minimal surfaces, we get the strict inequality $\langle \eta , \partial_t\rangle < \langle \eta_0 , \partial_t\rangle$ in the interior of $h_1$, and hence  
\begin{equation}\label{prop:first-period-saddle-tower:eqn1}
	\mathcal P_1(a,\varphi,b)=\int_{h_1}\langle \eta , \partial_t\rangle<\int_{h_1}\langle \eta_0 , \partial_t\rangle=0,
\end{equation}
provided that $b$ is large enough. The last integral in~\eqref{prop:first-period-saddle-tower:eqn1} vanishes because $\Sigma_0(b)$ is axially symmetric with respect to the perpendicular bisector of $h_1$ in $\mathbb{H}^2\times\{b\}$.
	
Due to the continuity of $\Sigma(a,\varphi,b)$ with respect to the parameters $(a,\varphi,b)$, the surfaces $\Sigma(a,\varphi,b)$ converge to $\Sigma(a,\varphi,0)$ as $b\to 0$. We have that $\mathcal P_1(a,\varphi,0)>0$ since $\Sigma(a,\varphi,0)$ lies above the horizontal surface $\Delta\times\{0\}$ by the maximum principle, and we can compare the third coordinate of their conormals along the common boundary $h_1$ by the boundary maximum principle (note that the third coordinate of the conormal of $\Delta\times\{0\}$ identically vanishes). By the continuity and monotonicity of $\mathcal P_1$ with respect to $b$ proved in Lemma~\ref{lem:p1-monotonicity}, there exist a unique $b_0\in \mathbb{R}^+$ such that $\mathcal P_1(a,\varphi,b_0)=0$. Hence this defines unequivocally $f(a,\varphi)=b_0$. The continuity of $f$ is a consequence of its uniqueness. If $(a_n,\varphi_n)$ and $(a_n',\varphi_n')$ are two sequences in $\Omega$ converging to some $(a_\infty,\varphi_\infty)\in\Omega$ such that, after passing to a subsequence, $f(a_n,\varphi_n)\to b_\infty$ and $f(a_n',\varphi_n')\to b_\infty'$, and then $\mathcal P_1(a_\infty,\varphi_\infty,b_\infty)=\mathcal P_1(a_\infty,\varphi_\infty,b_\infty')=0$, whence $b_\infty=b_\infty'$, and item (a) is proved.
		
As for the first limit in item (b), assume by contradiction that there is a sequence $a_n\to a_{\mathrm{max}}(\varphi_0)$ such that $f(a_n,\varphi_0)$ converges, after passing to a subsequence, to some $b_\infty\in[0,+\infty)$. The surface $\Sigma_0(b_\infty)$ lies below $\Sigma(a_{\mathrm{max}}(\varphi_0),\varphi,b_\infty)$ as graphs over their common domain $\Delta=\Delta_0$ by maximum principle, because their boundary values are ordered likewise. Note that they have a common value $b_\infty$ along $\ell_1$, so their inward-pointing conormals can be compared along $h_1$ again by the boundary maximum principle. Since the $\Sigma_0(b_\infty)$ has zero period because of its symmetry, this contradicts the fact that $\Sigma(a_{\mathrm{max}}(\varphi_0),\varphi,b_\infty)$ also has zero period.
		
We will compute the limit as $(a,\varphi)$ approaches $(0,\varphi_0)$ again by contradiction, so let us assume that there is a sequence $(a_n,\varphi_n)$ tending to $(0,\varphi_0)$ such that (after passing to a subsequence) $f(a_n,\varphi_n)\to b_\infty$, with $b_\infty\in(0,+\infty]$. Let us translate the surfaces $\Sigma(a_n,\varphi_n,f(a_n,\varphi_n))$ vertically so that they take zero value along $\ell_1$ and $-f(a_n,\varphi_n)$ along $\ell_3$. Since $a_n\to 0$, we can blow up the surface and the metric of $\mathbb{H}^2\times\mathbb{R}$ in such a way $a_n$ is equal to $1$. The new sequence of rescaled surfaces converges in the $\mathcal C^k$-topology for all $k$ to a minimal surface $\Sigma_\infty$ in  Euclidean space $\mathbb{R}^3$. This surface $\Sigma_\infty$ is a graph over a domain of $\mathbb{R}^2$ bounded by three lines $\ell_{1\infty}$, $\ell_{2\infty}$ and $\ell_{3\infty}$ such that $\ell_{2\infty}$ and $\ell_{3\infty}$ are parallel and $\ell_{1\infty}$ makes an angle of $\varphi_0$ with $\ell_{2\infty}$. Moreover, $\Sigma_\infty$ takes values $+\infty$ along $\ell_{2\infty}$, $-\infty$ along $\ell_{3\infty}$ (since $b_\infty>0$), and $0$ along $\ell_{1\infty}$. Let us consider $\Sigma_0$ the helicoid of $\mathbb{R}^3$ with axis $\ell_{1\infty}$ which is a graph over a half-strip of $\mathbb{R}^2$ as depicted in Figure~\ref{propl1} (right). Since $0<\varphi_0<\frac{\pi}{2}$, the intersection of the domains of $\Sigma_0$ and $\Sigma_\infty$ is a triangle on whose sides the boundary values of $\Sigma_0$ are greater than or equal to the corresponding values of $\Sigma_\infty$. By the maximum principle, we deduce that $\Sigma_0$ lies above the surface $\Sigma_\infty$ also in the interior of that triangle. Hence, we can compare their conormals along $\ell_{1\infty}$ by the boundary maximum principle to conclude that the period of $\Sigma_\infty$ is not zero, which contradicts that each of the surfaces $\Sigma(a_n,\varphi_n,f(a_n,\varphi_n))$ has zero period.\qedhere
\end{proof}

This solves the first period problem, and we will now focus on the second one. To this end, we will use the notation defined in Section~\ref{subsec:periods} (see also Figure~\ref{fig:horocycle-foliation}).

\begin{lemma}\label{lem:second-period}
	Let $\varphi_0\in(0,\frac{\pi}{2})$ and $a\in(0,a_{\mathrm{max}}(\varphi_0))$.
	\begin{enumerate}[label=\emph{(\alph*)}]
		\item The inequalities $x(t)<0$ and $\pi<\theta(t)<2\pi$ hold true for all $t\in(0, b]$.
		\item If the curve $\gamma$ intersects the positive $y$-axis with angle $\delta$, then $\delta<\varphi_0$, in which case $\mathcal P_2(a,\varphi_0,f(a,\varphi_0))=\cos(\delta)$.
		\item If $\mathcal P_2(a,\varphi_0,f(a,\varphi_0))=\cos(\delta)$ for some $\delta\in(0,\varphi_0)$, then $\gamma$ intersects the positive $y$-axis with angle $\delta$.
		\item If $\mathcal P_2(a,\varphi_0,f(a,\varphi_0))=1$, then $\gamma$ and the $y$-axis are asymptotic geodesics intersecting at the ideal point $(0,0)$.
	\end{enumerate}
	Furthermore,
	\[\lim_{a\to 0}\mathcal P_2(a,\varphi_0,f(a,\varphi_0))=\cos(\varphi_0),\qquad 
	\lim_{a\to a_{\mathrm{max}}(\varphi_0)}\mathcal P_2(a,\varphi_0,f(a,\varphi_0))=+\infty.\]
\end{lemma}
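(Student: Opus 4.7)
My overall strategy is to prove each part by combining ODE analysis of the tangent angle $\theta$ (for (a)), direct computation from the parametrization~\eqref{eqn:gamma} of $\gamma$ (for (c) and (d)), and a Gauss--Bonnet argument on a triangular region $R \subset \mathbb{H}^2$ bounded by $\widetilde v_2$ and pieces of two geodesics (for (b) and the limits). The formula~\eqref{eqn:p2} already encodes the geometry of $\gamma$ in terms of $(x_0, y_0, \theta_0)$, so much of the work is translating these analytic data into geometric statements about how $\gamma$ is positioned relative to the $y$-axis.

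For part (a), the identity $\theta' = \psi' - \cos\theta$ from Lemma~\ref{lem:orientation} together with $\psi' > 0$ yields $\theta'(0) = \psi'(0) + 1 > 0$, so $\theta$ enters $(\pi, 2\pi)$ immediately, and any putative return to $\theta = \pi$ at some $t_1 > 0$ is ruled out by the same computation, which would give $\theta'(t_1) = \psi'(t_1) + 1 > 0$ while $\theta$ is trying to decrease. Ruling out $\theta = 2\pi$ is the subtle step: at such a first tangent time $t_*$, the curve $\widetilde v_2$ would be tangent to the horocycle $\{y = y(t_*)\}$ from a specific side, and I would obtain a contradiction by comparing $\widetilde\Sigma$ with the horocycle cylinder $\{y = y(t_*)\}$ in $\mathbb H^2 \times \mathbb R$ (which has mean curvature $\tfrac12$) via the boundary maximum principle, leveraging the strict convexity of $\widetilde v_2$ relative to $\widetilde\Omega$ from Lemma~\ref{lem:orientation}. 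The bound $x(t) < 0$ then follows from $x'(0) < 0$ together with the fact that boundary arcs of the domain $\widetilde\Omega$ meet only at vertices, so $\widetilde v_2$ cannot cross back to the $y$-axis at an interior parameter value. For part (b), the identity $\mathcal P_2 = \cos\delta$ is~\eqref{eqn:p2} specialized to a genuine intersection, and I would establish $\delta < \varphi_0$ via Gauss--Bonnet on $R$. The two angles at $(0, 1)$ and $(x_0, y_0)$ are both $\pi/2$ because $\widetilde v_2$ is orthogonal to the $y$-axis and to $\gamma$ at these vertices (as conjugates of the orthogonal meetings $v_2 \perp h_3$ and $v_2 \perp h_1$ in $\Sigma$); the third angle at $(0, y_*)$ is $\delta$, and only $\widetilde v_2$ contributes to the geodesic curvature integral, with $\int_{\widetilde v_2}\kappa_g\,ds = \pm\Delta\psi = \pm\varphi_0$ because the horizontal normal $N$ rotates along $v_2$ by exactly the angle $\varphi_0$ between $\ell_3$ and $\ell_1$ at $p_2$. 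Gauss--Bonnet with $K = -1$ then yields $\delta = \varphi_0 - \operatorname{area}(R) < \varphi_0$.

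Parts (c) and (d) are direct computations from~\eqref{eqn:gamma}. Given $\mathcal P_2 = \cos\delta$ with $\delta \in (0, \varphi_0)$, setting $t_* = \delta$ gives $\gamma(t_*) = (0, -y_0 \sin t_*/\sin\theta_0)$, which has positive $y$-coordinate by part (a), and a computation of $\gamma'(t_*)/|\gamma'(t_*)|$ in the frame $\{E_1, E_2\}$ shows that $\gamma$ crosses the $y$-axis at angle exactly $t_* = \delta$. For $\mathcal P_2 = 1$, $t_* = 0$ produces the ideal point $\gamma(0) = (0, 0)$ shared with the $y$-axis, confirming (d). The two limits follow from Lemma~\ref{lem:first-period}: as $a \to 0$, the rescaled Euclidean limit of $\Sigma(a, \varphi_0, f(a, \varphi_0))$ is a Jenkins--Serrin solution over a Euclidean triangle with angle $\varphi_0$, whose conjugate angle $\delta$ tends to $\varphi_0$, giving $\mathcal P_2 \to \cos\varphi_0$; as $a \to a_{\max}(\varphi_0)$, $f \to \infty$ forces $\operatorname{area}(R) \to \infty$, so that $\gamma$ and the $y$-axis eventually become ultraparallel with distance growing without bound, yielding $\mathcal P_2 \to +\infty$. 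The main obstacle is the rigorous version of part (a), especially the exclusion of $\theta = 2\pi$ via the horocycle cylinder comparison, which requires careful orientation bookkeeping from Lemma~\ref{lem:orientation} to apply the boundary maximum principle correctly.
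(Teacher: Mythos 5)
Parts (b), (c), (d) of your plan coincide with the paper's argument (Gauss--Bonnet on the region bounded by $\widetilde v_2$, $\gamma$ and the $y$-axis, plus the sign analysis of the first coordinate of $\gamma$ from~\eqref{eqn:gamma}), and your ODE argument for $\theta>\pi$ (a first return to $\pi$ would force $\theta'=\psi'+1>0$) is a clean alternative to the paper's tangent-geodesic comparison. The genuine gaps are in the rest of part (a). First, the exclusion of $\theta=2\pi$ by comparing $\widetilde\Sigma$ with the horocylinder $\{y=y(t_*)\}\times\mathbb{R}$ does not work as stated: that comparison yields a contradiction only if the minimal surface lies locally on the mean-convex side $\{y\geq y(t_*)\}$ of the horocylinder, and the orientation data of Lemma~\ref{lem:orientation} (at $\theta=2\pi$ the conormal $\widetilde N=\sin\theta E_1-\cos\theta E_2=-E_2$) places the domain, hence the graph, on the opposite side, where the inequality between $0$ and $\pm\tfrac12$ gives nothing. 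Tellingly, your argument never uses $\varphi_0<\tfrac\pi2$, whereas this hypothesis is exactly what the paper exploits: Gauss--Bonnet on the region $V$ cut off by the normal geodesic at $t_0$, together with $\int_0^b\psi'=\varphi_0<\tfrac\pi2$, is what forbids $\theta$ from reaching $2\pi$. Second, the claim that $x(t)<0$ because ``boundary arcs of $\widetilde\Omega$ meet only at vertices'' is insufficient: only the compact arc $\widetilde h_3$ lies on the $y$-axis, so $\widetilde v_2$ could a priori cross the full geodesic $x=0$ far from $\widetilde h_3$ without violating embeddedness of $\partial\widetilde\Omega$. Again the paper rules this out by Gauss--Bonnet on the region $U$ enclosed by $\widetilde v_2$ and a segment of the $y$-axis, using $\varphi_0<\tfrac\pi2$.

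There is also a gap in the limit $a\to a_{\mathrm{max}}(\varphi_0)$. The mechanism you propose, $\area(R)\to\infty$, is not available: whenever $R$ exists, your own Gauss--Bonnet identity gives $\area(R)=\varphi_0-\delta<\tfrac\pi2$, and once $\gamma$ and the $y$-axis stop intersecting the region $R$ is not even defined; moreover ``the distance between the two geodesics grows without bound'' is precisely what has to be proved, and no argument is offered. The paper obtains it from a convergence analysis as $b=f(a,\varphi_0)\to+\infty$: normalizing $\widetilde v_2(0)=(0,1,0)$ and $\widetilde v_2'(0)=-\partial_x$, the conjugate pieces subconverge to a subset of the vertical plane $x^2+y^2=1$ (the limit saddle tower or horizontal catenoid drifts away under this normalization), whence $\theta_0\to\tfrac{3\pi}{2}$ and $(x_0,y_0)\to(-1,0)$, and then~\eqref{eqn:p2} forces $\mathcal P_2\to+\infty$. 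Some compactness argument of this type is needed and is the main content of that limit. Your blow-up argument for the limit $a\to0$ is plausible (the Euclidean conjugate angle is $\varphi_0$ because the normal turns by $\varphi_0$ along $v_2$), but note the paper gets it more directly by integrating $\theta'=\psi'-\cos\theta$ to obtain $\theta_0=\varphi_0+\pi-\int_0^b\cos\theta$ and letting $b\to0$.
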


\begin{proof}
We will identify $\widetilde v_2$ with its projection to $\mathbb H^2$ for the sake of simplicity. Therefore, $\widetilde v_2$ is strictly convex (in the hyperbolic geometry) towards the exterior of $\widetilde\Delta$ by Lemma~\ref{lem:orientation}, and this implies that any geodesic tangent to $\widetilde v_2$ lies locally in the interior of $\widetilde\Delta$ except for the point of tangency. In particular, we have that $\theta(t)>\pi$ for $t$ close to $0$ by just comparing $\widetilde v_2$ with the tangent geodesic at $\widetilde v_2(0)=(0,1)$ (see Figure~\ref{fig:gauss-bonnet}, left). Furthermore, if $\theta(t)>\pi$ does not hold for all $t\in(0,b]$, then at the smallest $t_0>0$ such that $\theta(t_0)=\pi$, the tangent geodesic has points outside $\widetilde\Delta$ arbitrarily close to $\widetilde v_2(t_0)$, which is a contradiction (see Figure~\ref{fig:gauss-bonnet}, left).

\begin{figure}
\begin{center}
\includegraphics[width=\textwidth]{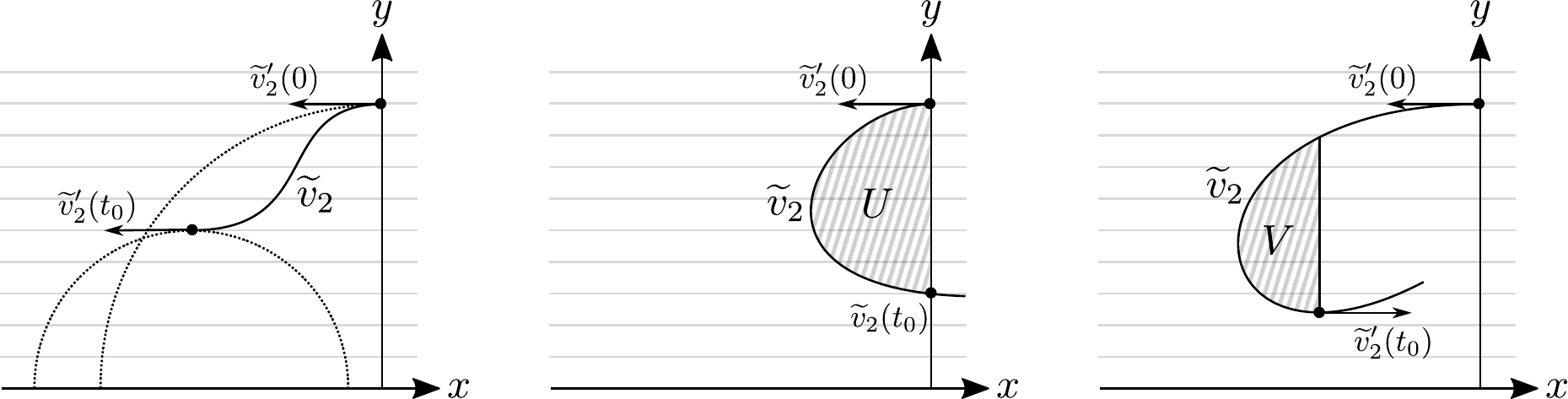}
\caption{Tangent geodesics at $\widetilde v_2(0)$ and at a first $t_0\in(0,b]$ such that $\theta(t_0)=\pi$ (left). A first $t_0\in(0,b]$ such that $x(t_0)=0$ (center). A first $t_0\in(0,b]$ such that $\theta(t_0)=2\pi$ (right). The domains $U$ and $V$ are those where we apply Gauss--Bonnet formula in Lemma~\ref{lem:second-period}.}\label{fig:gauss-bonnet}
\end{center}
\end{figure}

Assume by contradiction that $x(t)<0$ does not hold in general, and let $t_0\in(0,b]$ be the smallest value such that $x(t_0)=0$, so the curve $\widetilde v_2$ between $0$ and $t_0$ together with a segment of the $y$-axis enclose a bounded domain $U\subset\mathbb{H}^2$ (see Figure~\ref{fig:gauss-bonnet}, center). The curve $\widetilde v_2$ is convex towards the interior of $U$, and $U$ has two interior angles equal to $\frac\pi2$ and $\alpha=\theta(t_0)-\frac{3\pi}{2}\in(0,\pi)$, so Gauss--Bonnet formula yields 
\begin{equation}\label{lem:second-period:eqn1}
\begin{aligned}
0>-\area(U)&=2\pi+\int_{0}^{t_0}\kappa_g(t)\,\mathrm{d}t-(\tfrac{\pi}{2}+\pi-\alpha)\\
&>\tfrac{\pi}{2}+\int_{0}^{b}\kappa_g(t)\,\mathrm{d}t=\tfrac{\pi}{2}-\int_{0}^{b}\psi'(t)\,\mathrm{d}t=\tfrac{\pi}{2}-\varphi_0,
\end{aligned}\end{equation}
where $\kappa_g<0$ is the geodesic curvature with respect to the unit conormal $\widetilde N$ pointing outside $\widetilde\Delta$, see Lemma~\ref{lem:orientation}. We have also used that the angle $\psi$ of the normal $N$ along $v_2$, in the sense of~\eqref{eqn:rotation-angle}, rotates counterclockwise with $\psi'=-\kappa_g>0$ and $\psi(b)-\psi(0)=\varphi_0$. The inequality~\eqref{lem:second-period:eqn1} contradicts the assumption $\varphi_0\in(0,\frac\pi2)$.

Let us assume, again by contradiction, that there is (a first) $t_0\in(0,b]$ such that $\theta(t_0)=2\pi$. This implies that the normal geodesic to $\widetilde v_2$ at $t_0$ is a straight line parallel to the $y$-axis. Let $V\subset\mathbb{H}^2$ be the domain enclosed by this line together with an arc of $\widetilde v_2$ (see Figure~\ref{fig:gauss-bonnet}, right). Note that $V$ has two interior angles $\alpha\in(0,\pi)$ and $\frac{\pi}{2}$, plus $\widetilde v_2$ is convex towards $V$. Reasoning as in~\eqref{lem:second-period:eqn1}, we get the same contradiction $0>-\area(V)>\frac{\pi}{2}-\varphi_0$, which finishes the proof of item (a).

As for item (b), if $\gamma$ intersects the $y$-axis with angle $\delta$, then there is a region $W\subset\mathbb{H}^2$ bounded by $\gamma$, $\widetilde v_2$ and the $y$-axis. Gauss--Bonnet formula, in the same fashion as in Equation~\eqref{lem:second-period:eqn1}, gives the inequality $0>-\area(W)>\delta-\varphi_0$, which is equivalent to $\delta<\varphi_0$. The equality $\mathcal P_2(a,\varphi,f(a,\varphi))=\cos(\delta)$ was given in~\eqref{eqn:p2}.

We will now discuss items (c) and (d). Note that $\gamma(\pi)$ has negative first coordinate by the above analysis, so $\gamma$ intersects the $y$-axis if and only the first coordinate of
\begin{equation}\label{prop:second-period:eqn1}
\gamma(0)=\left(x_0-y_0\frac{1+\cos(\theta_0)}{\sin(\theta_0)},0\right)
\end{equation}
is positive (here, $\sin(\theta_0)<0$ because $\pi<\theta_0<2\pi$). If there exists $\delta\in(0,\varphi_0)$ such that $\mathcal P_2(a,\varphi_0,f(a,\varphi_0))=\frac{x_0\sin(\theta_0)}{y_0}-\cos(\theta_0)=\cos(\delta)\in (0,1)$, then the first coordinate in~\eqref{prop:second-period:eqn1} is positive, and it follows from (b) that the angle at the intersection is precisely $\delta$. If $\mathcal P_2(a,\varphi_0,f(a,\varphi_0))=1$, then the first coordinate of~\eqref{prop:second-period:eqn1} vanishes, so $\gamma$ and the $y$-axis are asymptotic at the ideal point $(0,0)$.

To finish the proof, let us analyze the limits. Integrating from $0$ to $b$ the identity $\theta'=\psi'-\cos(\theta)$ in Lemma~\ref{lem:orientation} (applied to $v_2$), and taking into account that $\theta(b)-\theta(0)=\theta_0-\pi$ and $\psi(b)-\psi(0)=\varphi_0$, we get the relation
\begin{equation}\label{eqn:twist2}
\theta_0=\varphi_0+\pi-\int_0^b\cos(\theta(s))\,\mathrm{d}s.
\end{equation}
In particular, $\theta_0\to\varphi_0+\pi$ and $(x_0,y_0)\to(0,1)$ as $b\to 0$ (note that the length of $\widetilde v_2$ goes to zero). This implies that the first component of~\eqref{prop:second-period:eqn1} is positive, i.e., $\gamma(0)$ and $\gamma(\pi)$ lie at distinct sides of the $y$-axis for $b$ small enough, so $\gamma$ intersects the positive $y$-axis at some point. By Lemma~\ref{lem:first-period}, if $a\in(0,a_{\mathrm{max}}(\varphi_0))$ tends to zero, then $b=f(a,\varphi_0)$ also tends to zero and 
\[\lim_{a\to 0}\mathcal P_2(a,\varphi_0,f(a,\varphi_0))=\lim_{a\to 0}\left(\frac{x_0\sin(\theta_0)}{y_0}-\cos(\theta_0)\right)=\cos(\varphi_0).\]
	
As for the limit $a\to a_{\mathrm{max}}(\varphi_0)$, let $(a_n,\varphi_0)\in\Omega$ be a sequence with $a_n\to a_{\mathrm{max}}(\varphi_0)$. Lemma~\ref{lem:first-period} tells us that $b_n=f(a_n,\varphi_0)\to +\infty$, so the surfaces $\Sigma(a_n,\varphi_0,b_n)$ converge, up to a subsequence and vertical translations (in such a way $h_1$ is a segment at height $0$) to a solution $\Sigma_\infty$ of a Jenkins--Serrin problem over an isosceles triangle with values $0$ along the unequal side and $+\infty$ and $-\infty$ along the other sides. We will denote in the sequel the elements of $\Sigma(a_n,\varphi_0,b_n)$ with a subindex $n$.
\begin{itemize}
	\item If $l<\infty$, the conjugate surfaces converge to $\widetilde\Sigma_\infty$, twice the fundamental piece of a symmetric saddle tower with four ends in the quotient (this conjugate construction is analyzed in~\cite{MorRod}). If we fix $\widetilde v_{2n}\subset\mathbb{H}^2\times\{0\}$, then the curves $\widetilde{v}_{1n}$ converge to a complete horizontal curve $\widetilde v_{1\infty}\subset\mathbb{H}^2\times\{-l\}$ (convex towards the exterior of the domain), and the curves $\widetilde h_{3n}$ tend to an ideal vertical segment $\widetilde h_{3\infty}$, see Figure~\ref{fig:catenoid}.	However, we will translate and rotate the surfaces first so that $\widetilde v_{2n}(0)=(0,1,0)$ and $\widetilde v_{2n}'(0)=-\partial_x$ in the half-space model in order to analyze the rotation $\theta_{0n}$ of $\widetilde v_{2n}'$ with respect to the horocycle foliation (i.e., we adapt the sequence to the setting of Figure~\ref{fig:horocycle-foliation}). This means that a subsequence of $\Sigma(a_n,\varphi_0,b_n)$ no longer converges to a saddle tower but to a subset of the vertical plane $x^2+y^2=1$. Therefore, $\theta_{0n}\to\frac{3\pi}{2}$ and $\widetilde v_{2n}(b_n)=(x_{0n},y_{0n})\to(-1,0)$ as $n\to\infty$. In view of~\eqref{prop:second-period:eqn1}, we deduce that $\gamma_n$ does not intersect the positive $y$-axis for large $n$, and~\eqref{eqn:p2} implies that $\mathcal P_2(a_n,\varphi_0,b_n)\to+\infty$.

	\item If $l=\infty$, then it is also well known~\cite{MorRod,Pyo} that the conjugate surfaces converge to $\widetilde\Sigma_\infty$, a quarter of a horizontal catenoid when we keep the point $\widetilde v_{2n}(b_n)$ fixed (and hence  the curves $\widetilde{v}_{1n}$ converge to a complete ideal horizontal geodesic $\widetilde v_{1\infty}\subset\mathbb{H}^2\times\{-\infty\}$). However, if we fix $\widetilde v_{2n}(0)=(0,1,0)$ and $\widetilde v_{2n}'(0)=-\partial_x$ instead, then a subsequence converges to a subset of the vertical plane $x^2+y^2=1$ as in the case $l<\infty$, so we can reason likewise.\qedhere
\end{itemize}
\end{proof}

\begin{figure}
\begin{center}
\includegraphics[width=0.8\textwidth]{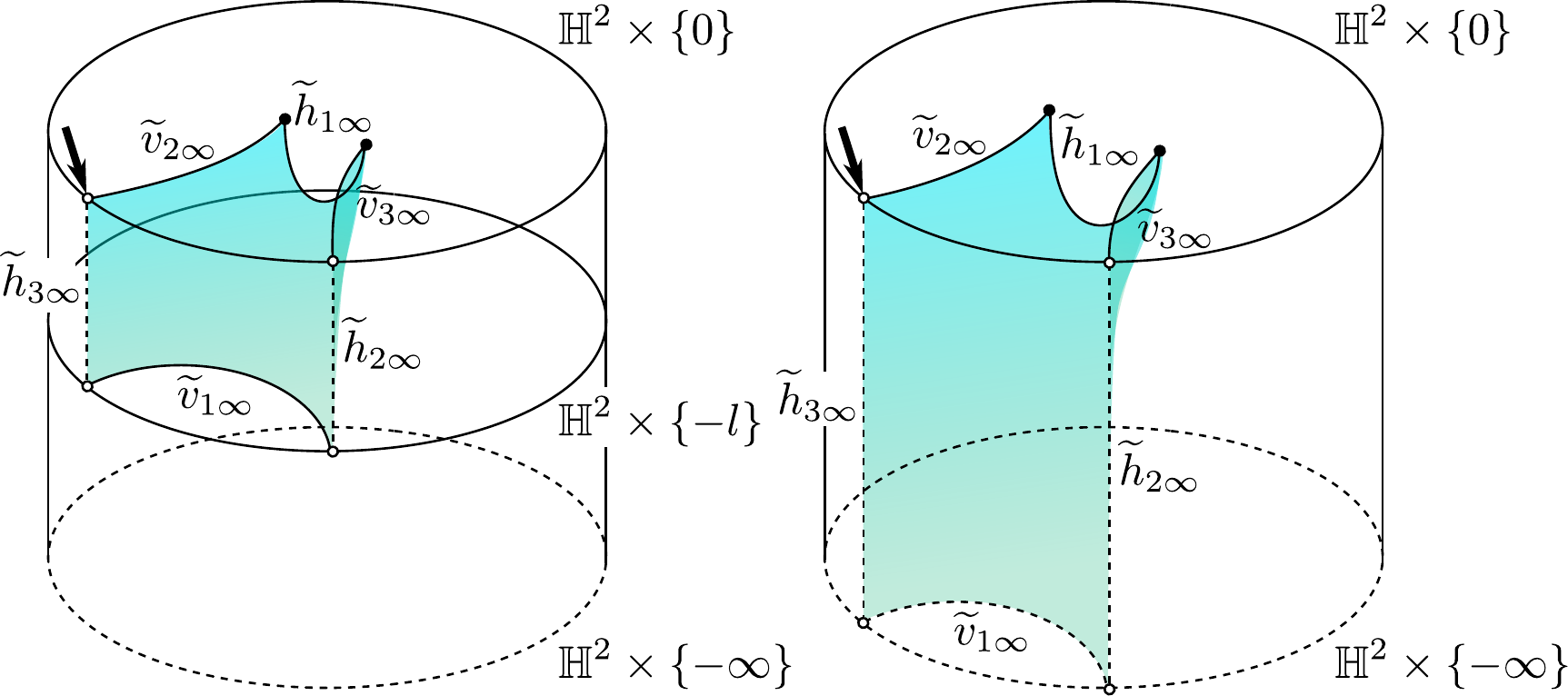}
\caption{The limit saddle tower ($l<\infty$) and catenoid ($l=\infty$) when $a\to a_{\rm max}(\varphi_0)$. In the proof Lemma~\ref{lem:second-period} we bring the points at which the arrows aim to a fixed point of $\mathbb H^2$, so we get vertical planes in the limit (instead of the saddle tower or the catenoid).}\label{fig:catenoid}
\end{center}
\end{figure}

\begin{proof}[Proof of Theorems~\ref{thm:knoids} and~\ref{thm:saddle-towers}]
Let $k\geq 3$. For each $\frac{\pi}{k}<\varphi<\frac{\pi}{2}$, Lemma~\ref{lem:second-period} ensures that $\mathcal P_2(a,\varphi,f(a,\varphi))$ tends to $\cos(\varphi)$ when $a\to 0$ and tends to $+\infty$ when $a\to a_{\mathrm{max}}(\varphi)$. Since $\cos(\varphi)<\cos(\frac{\pi}{k})$ and $\mathcal P_2$ is continuous, there exists some $a_\varphi\in(0,a_{\mathrm{max}}(\varphi))$ such that $\mathcal P_2(a,\varphi,f(a_\varphi,\varphi))=\cos(\frac{\pi}{k})$, though it might not be unique. Therefore, we deduce from item (c) of Lemma~\ref{lem:second-period} that $\widetilde\Sigma_\varphi=\widetilde\Sigma(a_\varphi,\varphi,f(a_\varphi,\varphi))$ solves both period problems. We will show that $\Sigma_\varphi=\Sigma(a_\varphi,\varphi,f(a,\varphi))$, and hence $\widetilde\Sigma_\varphi$, has finite total curvature by adapting Collin and Rosenberg's argument, see~\cite[Remark~7]{CR}.

To this end, we consider first the case $l=\infty$. For each $k\in\mathbb{N}$, let $p_1(k)\in\ell_3$ be such that $d(p_1(k),p_3)=k$, and let $\ell_2(k)$ (resp.\ $\ell_3(k)$) be the geodesic segment joining $p_3$ and $p_1(k)$ (resp.\ $p_2$ and $p_1(k)$). For each $n\in\mathbb{N}$ with $n\geq f(a_\varphi,\varphi)$, we will consider the Dirichlet problem over the triangle of vertexes $p_1(k)$, $p_2$ and $p_3$ with boundary values $f(a_\varphi,\varphi)$ on $\ell_1$, $n$ on $\ell_2(k)$ and $0$ over $\ell_3(k)$. These conditions span a unique compact minimal disk $\Sigma_\varphi^{k,n}$ (with boundary) which is a graph over the interior of the triangle. The surface $\Sigma_\varphi^{k,n}$ has geodesic boundary and six internal angles, all of them equal to $\frac{\pi}{2}$, so Gauss--Bonnet formula gives a total curvature of $-\pi$ for $\Sigma_\varphi^{k,n}$. As $k\to\infty$, the surfaces $\Sigma_\varphi^{k,n}$ converge uniformly on compact subsets (as graphs) to a surface $\Sigma_\varphi^n$ over $\Delta$ with boundary values $f(a_\varphi,\varphi)$ over $\ell_1$, $n$ over $\ell_2$, and $0$ over $\ell_3$ (this convergence is monotonic by the maximum principle). Therefore, Fatou's Lemma implies that the total curvature of $\Sigma_\varphi^{n}$ is at least $-\pi$. Finally, we let $n\to\infty$ so the $\Sigma_\varphi^{n}$ converge (also monotonically) to $\Sigma_\varphi$ on compact subsets, and the same argument implies that $\Sigma_\varphi$ has finite total curvature at least $-\pi$ (note that the Gauss curvature of a minimal surface in $\mathbb{H}^2\times\mathbb{R}$ is nowhere positive by Gauss equation). If $l<\infty$, the same idea works by just truncating at height $n$ (i.e., there is no need of introducing the sequence with index $k$).

By successive mirror symmetries across the planes containing the components of $\partial\widetilde\Sigma_\varphi$, we get a complete proper Alexandrov-embedded minimal surface $\overline\Sigma_\varphi\subset\mathbb{H}^2\times\mathbb{R}$.

\begin{itemize}
	\item  If $l=\infty$, then the curve $\widetilde v_1$ is an ideal horizontal geodesic, and we only need to reflect once about a horizontal plane, i.e., the plane containing $\widetilde v_2$ and $\widetilde v_3$. Hence, $\overline\Sigma_\varphi$ consists of $4k$ copies of $\widetilde\Sigma_\varphi$, so the total curvature in this case is not less than $-4k\pi$, and~\cite[Theorem~4]{HMR} ensures that $\overline\Sigma_\varphi$ is asymptotic to a certain geodesic polygon at infinity. From the above analysis, each end of $\overline\Sigma_\varphi$ has asymptotic boundary consisting of four complete ideal geodesics: two horizontal ones obtained from $\widetilde v_1$, and two vertical ones obtained from $\widetilde h_2$. Taking into account that $\overline\Sigma_\varphi$ has genus $g=1$, Equation~\eqref{eqn:generalized-GB} (with $m=k$) reveals that its total curvature is exactly $-4k\pi$. 

	Note that each end of $\overline\Sigma_\varphi$ is asymptotic to a vertical plane and it is contained in four copies of $\widetilde\Sigma_\varphi$. We claim that the subset of $\overline\Sigma_\varphi$ formed by these four copies is a symmetric bigraph, so the end is embedded in particular. This claim follows from the fact that two of these four pieces come from $\Sigma_\varphi$ and its axially symmetric surface with respect to $h_2$, which project to a quadrilateral of $\mathbb{H}^2$. Since this quadrilateral is convex, the Krust-type result in~\cite{HST} guarantees that the conjugate $\widetilde\Sigma_\varphi$ and its mirror symmetric surface across $\widetilde h_3$ form a graph. The other two copies needed to produce the aforesaid bigraph are their symmetric ones with respect to the slice containing $\widetilde v_2$ and $\widetilde v_3$.

	\item If $l<\infty$, then the composition of the reflections with respect to the horizontal planes containing $\widetilde v_1$ and $\widetilde v_3$ is a vertical translation $T$ of length $2l$. Thus, $\overline\Sigma_\varphi$ induces a surface in the quotient of $\mathbb{H}^2\times\mathbb{R}$ by $T$ with total Gauss curvature at least $-4k\pi$, since it consists of $4k$ pieces isometric to $\widetilde\Sigma_\varphi$. This surface has genus $1$ and $2k$ ends, so it follows from the main theorem in~\cite{HM} that its total curvature is exactly $-4k\pi$. This result also implies that each end of $\overline\Sigma_\varphi$ is asymptotic to a vertical plane (in the quotient). \qedhere
\end{itemize}
\end{proof}

 \begin{remark}
 It is important to notice that we have not proved the uniqueness of the surface $\Sigma_\varphi$. This would be automatically true if we could show that the second period $\mathcal P_2(a,\varphi,f(a,\varphi))$ is strictly increasing in the parameter $a$, though a comparison of the surfaces for different values of $a$ seems to be difficult, since we do not even know if the function $f$ solving the first period problem is monotonic.
 \end{remark}

As $\varphi$ approaches $\frac{\pi}{k}$, the value $a_\varphi$ solving the two period problems in the proof of Theorems~\ref{thm:knoids} and~\ref{thm:saddle-towers} goes to zero, and the surface $\widetilde\Sigma_\varphi$ converges, after rescaling, to a genus $1$ minimal $k$-noid in $\mathbb{R}^3$ (as in item (b) of Lemma~\ref{lem:first-period}). Moreover, when $\varphi$ approaches $\frac{\pi}{2}$, the surface $\Sigma_\varphi$ converges to an open subset of a helicoid in $\mathbb{R}^3$ after rescaling, and it follows that the conjugate surfaces $\widetilde\Sigma_\varphi$ must converge a quarter of a catenoid in $\mathbb{R}^3$ (the curve $\widetilde h_1$ converges to half of the neck of such catenoid).

\subsection{The embeddedness problem}

In the proof of Theorems~\ref{thm:knoids} and~\ref{thm:saddle-towers}, it is shown that the conjugate piece $\widetilde{\Sigma}_\varphi$ is a graph over the domain $\widetilde \Delta\subset\mathbb{H}^2$. But it could happen that when we reflect $\widetilde{\Sigma}_\varphi$ over the vertical plane containing $\widetilde h_1$, the resulting surface is not embedded since the reflected curve of $\widetilde v_3$ might intersect $\widetilde v_3$. Observe that, as the family of examples with $k$ ends converges to a genus $1$ minimal $k$-noid in $\mathbb{R}^3$ after blow up, see also~\cite{Maz}, there do exist non-embedded examples of $k$-noids and saddle towers with genus $1$ in $\mathbb{H}^2\times\mathbb{R}$ for all $k\geq 3$.

\begin{figure}
	\includegraphics[width=\textwidth]{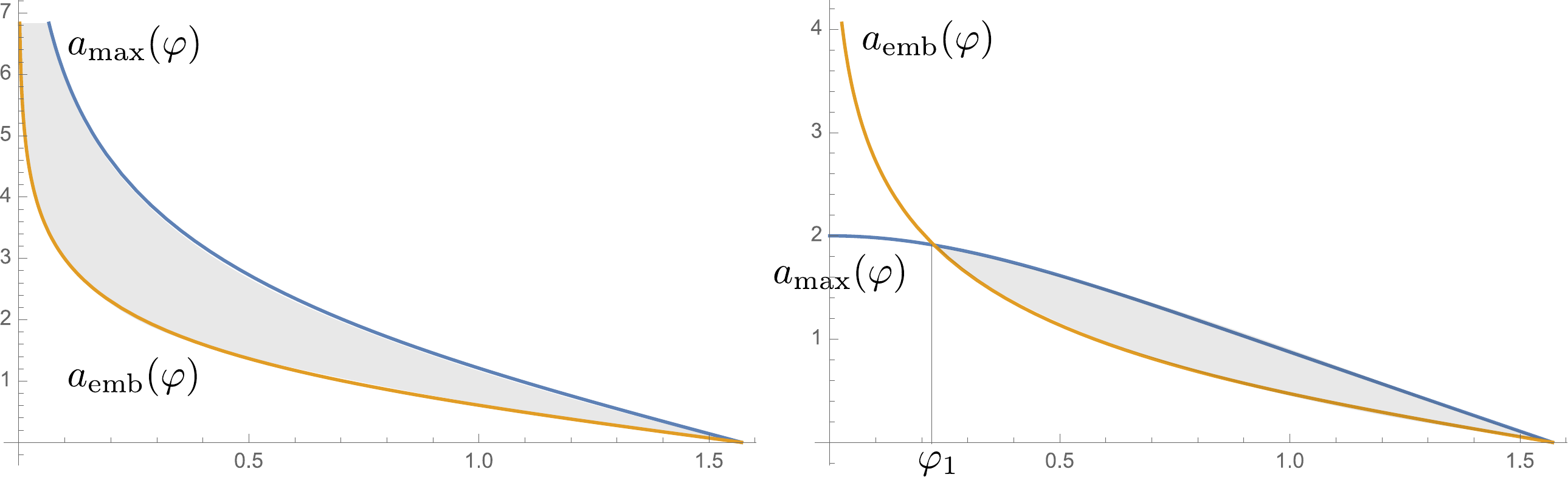}
	\caption{Graphics of the functions $\varphi\mapsto a_{\text{max}}(\varphi)$ and $\varphi\mapsto a_{\text{emb}}(\varphi)$ with $l=\infty$ (left) and $l=1$ (right). In the shaded regions, embeddedness is guaranteed by the Krust property.}\label{aemb} 
\end{figure}

Therefore, embeddedness is guaranteed if the extended surface by reflection about the vertical plane containing $\widetilde h_1$ is embedded. The Krust property yields this if the initial surface $\Sigma_\varphi$ extended by axial symmetry about the geodesic $h_1$ is still a graph over a convex domain, i.e., if the angle of $\Delta$ at $p_3$ is at most $\frac{\pi}{2}$. Elementary hyperbolic geometry shows that this is equivalent to $a\geq a_{\text{emb}}(\varphi)$, where
	\begin{equation}\label{eqn:emb}
		 a_{\text{emb}}(\varphi)=\arcsinh(\tanh(l)\cot(\varphi)),
	\end{equation}
and $\tanh(l)=1$ if $l=\infty$. Hence, the surfaces in Theorems~\ref{thm:knoids} and~\ref{thm:saddle-towers} are properly embedded provided that $a_{\mathrm{emb}}(\varphi)\leq a_\varphi<a_{\text{max}}(\varphi)$. If $l=\infty$, then $a_{\text{emb}}(\varphi)<a_{\text{max}}(\varphi)$ for all $\varphi\in(0,\frac\pi2)$; if $l<\infty$, there exists $\varphi_1\in(0,\frac\pi2)$  such that $a_{\text{emb}}(\varphi)<a_{\text{max}}(\varphi)$ if and only if $\varphi\in(\varphi_1,\frac\pi2)$, see Figure~\ref{aemb}.

However, on the one hand, it seems difficult to know if a value of $a_\varphi$ solving both period problems lies in this interval; on the other hand, embeddedness may occur even if $a\geq a_{\rm emb}(\varphi)$ does not hold. It is expected that there are always values of $(a,\varphi)$ producing embedded examples solving the two period problems for all $k\geq 3$, and it seems reasonable that this occurs when $\varphi$ becomes close to $\frac{\pi}{2}$.

\subsection{Examples with infinitely many ends.}
Let us tackle the proof of Theorem~\ref{thm:infty-noids}, which is a particular case of the above constructions for $l=\infty$ (the proof can be easily adapted to the case $l<\infty$). Lemma~\ref{lem:second-period} and the continuity of $\mathcal P_2$ imply that, for all $\varphi\in(0,\frac\pi2)$, there are values of $a\in(0,a_{\mathrm{max}}(\varphi))$ such that $\mathcal P_2(a,\varphi,f(a,\varphi))$ is either equal to $1$ or greater than $1$. Let us study these two cases:
\begin{itemize}
	\item If $\mathcal P_2(a,\varphi,f(a,\varphi))=1$, then $\widetilde h_1$ and $\widetilde h_3$ are contained in vertical planes over asymptotic geodesics of $\mathbb{H}^2$ in view of item (d) of Lemma~\ref{lem:second-period}. This means that $\widetilde\Sigma(a,\varphi,f(a,\varphi))$ is contained in the region between these two geodesics, see Figure~\ref{fig:infty-noids} (center), and mirror symmetries across the corresponding vertical planes span a group of isometries fixing the common point at infinity. This group contains a discrete group of parabolic translations, and gives rise to the $1$-parameter family of parabolic $\infty$-noids.
	\item The case $\mathcal P_2(a,\varphi,f(a,\varphi))>1$ occurs in an open subset of $\Omega$, and gives rise to the $2$-parameter family of hyperbolic $\infty$-noids. The two geodesics of $\mathbb{H}^2$ containing the projections of $\widetilde h_1$ and $\widetilde h_3$ do not intersect in this case and successive reflections across their associated vertical planes span a group of isometries containing a discrete group of hyperbolic translations, see Figure~\ref{fig:infty-noids} (right).
\end{itemize}

\begin{figure}[t]
	\includegraphics[width=\textwidth]{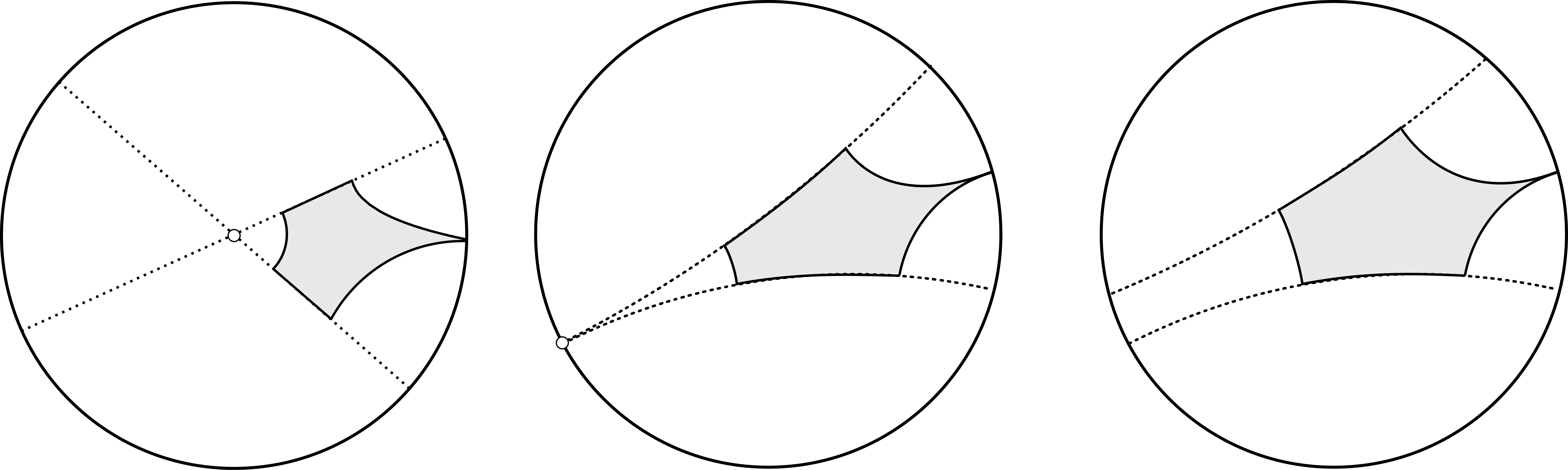}
	\caption{The fundamental domains of a $3$-noid (left), a parabolic $\infty$-noid (middle), and a hyperbolic $\infty$-noid (right). Dotted curves represent geodesics containing the projection of $\widetilde h_1$ and $\widetilde h_3$.} \label{fig:infty-noids}
\end{figure}

Similar arguments to those in the proof of Theorems~\ref{thm:knoids} and~\ref{thm:saddle-towers} (using the description of periodic surfaces with finite total curvature in~\cite{HM}) show that each end of the constructed surfaces is embedded and has finite total curvature, plus the global surface is Alexandrov-embedded. Observe that in the case of hyperbolic $\infty$-noids, we can always choose $a\geq a_{\mathrm{emb}}(\varphi)$, defined in the previous section, which means that whenever the parameters $(a,\varphi)$ lie in this open subset of $\Omega$, the reflected surface is a properly embedded hyperbolic $\infty$-noid. In the case of parabolic $\infty$-noids, we are not able to guarantee global embeddedness.

\medskip

\noindent\textbf{Competing interests:} The authors declare none.

\end{document}